\newcommand{\volumetitle}[1]{}
\newcommand{\emsaffil}[2]{}
\newcommand{\emsauthor}[3]{}
\newcommand{\titlemark}[1]{}
\newenvironment{ack}{\section*{Acknowledgements}}{}
\newenvironment{funding}{}{}
\renewcommand{\paragraph}[1]{
  \subsection*{#1.}}
\author{Roland Bauerschmidt}
\address{University of Cambridge, Statistical Laboratory, DPMMS}
\email{rb812@cam.ac.uk}
\author{Tyler Helmuth}
\address{Durham University, Department of Mathematical Sciences}
\email{tyler.helmuth@durham.ac.uk}
\declaretheorem[numberwithin=section]{theorem}
\numberwithin{equation}{section}
\begin{document}

\volumetitle{ICM 2022} 

\title{Spin systems with hyperbolic symmetry: a survey}
\titlemark{Spin systems with hyperbolic symmetry}

\emsauthor{1}{Roland Bauerschmidt}{R.~Bauerschmidt}
\emsauthor{2}{Tyler Helmuth}{T.~Helmuth}

\emsaffil{1}{DPMMS, University of Cambridge, CB3 0WB, UK \email{rb812@cam.ac.uk}}
\emsaffil{2}{Mathematical Sciences,
Durham University, DH1 3LE, UK \email{tyler.helmuth@durham.ac.uk}}


\begin{abstract}
  Spin systems with hyperbolic symmetry originated as simplified
  models for the Anderson metal--insulator transition, and were
  subsequently found to exactly describe probabilistic models of
  linearly reinforced walks and random forests.  In this survey we
  introduce these models, discuss their origins and main features,
  some existing tools available for their study, recent probabilistic
  results, and relations to other well-studied probabilistic models.
  Along the way we discuss some of the (many) open questions that
  remain.
\end{abstract}

\maketitle

\section{Introduction}

Classical spin systems with spherical symmetry, such as the Ising and
classical Heisenberg models, are basic models for magnetism and have
been studied extensively over the last century. It is well-understood
that the associated symmetry groups play an important role,
particularly for the critical and low-temperature behaviour of these
models.  For example, the discrete $\mathbb{Z}_{2}$ symmetry of the
Ising model is spontaneously broken at low temperatures, and in this
phase truncated correlations decay exponentially. For models with
continuous $O(n)$ symmetries, $n\geq 2$, low temperature truncated
correlations instead decay polynomially, a reflection of the fact that
the symmetry is spontaneously broken to an $O(n-1)$ symmetry.

Spin systems with hyperbolic symmetry groups are also studied in
condensed matter physics, primarily because of their relevance for 
the Anderson delocalisation--localisation (metal--insulator) transition 
of random Schr\"odinger operators and related random matrix models 
\cite{10.1007/BF01319839,MR575503,MR708812}.
A rigorous analysis of the Anderson transition
remains an outstanding challenge; see
Section~\ref{sec:random-band-matrices}. 
The essential physical phenomena of the Anderson transition are
expected to be captured by the more tractable $\mathbb{H}^{2|2}$
model, a simplified spin system with hyperbolic
symmetry~\cite{MR1134935}.  Surprisingly, the $\mathbb{H}^{2|2}$ model
and its natural generalisations are intimately connected to
probabilistic lattice models.  The $\mathbb{H}^{2}$ and
$\mathbb{H}^{2|2}$ models, motivated by the Anderson
transition~\cite{MR2728731,MR2104878}, are exactly related to
(linearly) edge-reinforced random walks and vertex-reinforced jump
processes, introduced independently in the probability literature in
the 1980s~\cite{MR1008047} and early
2000s~\cite{MR1900324}. 
A similar connection exists between the related $\mathbb{H}^{0|2}$
model and random forests~\cite{MR2110547,MR4218682}; random forests
arose earlier (for example) in connection with the Fortuin--Kasteleyn
random cluster model~\cite{MR2243761}. The connections between
hyperbolic spin systems and probabilistic phenomena are the main topic
of this survey.

More specifically, this survey focuses on probabilistic results in
line with the original physical motivation for studying hyperbolic
spin systems. In particular, we focus on results for $\mathbb{Z}^{d}$
(and its finite approximations) for $d\geq 2$.  Our perspective is
that a central role is played by the continuous symmetry groups of the
spin systems. There are other perspectives available, notably that of
Bayesian statistics. While the latter perspective has played a role in
important results, e.g.~\cite{MR3189433,2102.08988}, and has found use
in statistical contexts~\cite{MR2816340,MR2278358,MR3534357}, we will
not mention it further. Similarly, there are many related works we
cannot discuss; fortunately many of these are discussed in recent
surveys on closely related topics~\cite{MR2282181,
  MR3204347,MR2953867,MR3469136}.

To set the stage, the remainder of this introduction recalls the
\emph{magic formula} for edge-reinforced random walk that lead to the
discovery of the connections discussed in this survey. Readers
familiar with the magic formula may wish to jump to
Section~\ref{sec:hyp}, where we introduce hyperbolic spin systems, or
to Section~\ref{sec:random-band-matrices}, which discusses the
physical background. The probabilistic representations and results for
reinforced random walks and random forests are discussed in
Sections~\ref{sec:walks} and~\ref{sec:forests}, respectively, along
with questions for the future.

\paragraph{Magic Formula for Edge-Reinforced Random Walk}

Fix $\alpha>0$, a graph $G=(\Lambda,E)$, and an initial vertex
$0\in \Lambda$. \emph{Edge-reinforced random walk (ERRW)} with
$X_{0}=0$ and initial weights $\alpha$ is the stochastic process
$(X_{n})_{n\geq 0}$ with transitions
\begin{equation}
  \label{eq:ERRW}
  \mathbb{P}^{\text{ERRW}(\alpha)}_{0}[X_{n+1}=j | (X_{m})_{m\leq n}, X_{n}=i] =
  \frac{(\alpha+L_n^{ij})1_{ij\in E}}{\sum_{k:ik\in E}(\alpha+L_n^{ik})},
\end{equation}
where $L_{n}^{ij}$ is the number of times the edge $ij$ has been
crossed up to time $n$ (in either direction). 
The transition
rates change rapidly if $\alpha$ is small, and hence this is called the
\emph{strong reinforcement} regime.
\emph{Weak reinforcement} refers to $\alpha$ being large.
The definition can be generalised to edge dependent weights $\alpha=(\alpha_{ij})$
in a straightforward manner.

Some intuition about ERRW can be gained by considering the case when
$G$ is a path on three vertices. Call one edge blue, one edge red, and
start an ERRW at the middle vertex. If $\alpha=2$ then the law of the
vector $\frac{1}{2}L_{2n}$ of (half of) the number of crossings of the
edges at time $2n$ is the law of a \emph{P\'{o}lya urn}. P\'{o}lya's
urn is the process that starts with an urn containing one red and one
blue ball, and then sequentially draws a ball and replaces it with two
balls, both of the same colour as the drawn ball. The fundamental fact
about P\'{o}lya's urn is that $\frac{1}{2n}L_{2n}$ converges to
$(U,1-U)$ where $U$ is a uniform random variable on $[0,1]$, i.e., the
fraction of crossings of the blue edge is uniform. This can be proven
by induction. Note that for an ordinary simple random walk this limit
would be deterministic. \emph{A priori} it is hard to predict how ERRW
behaves on more complicated graphs. For example, is ERRW transient if
simple random walk is transient? Does the answer depend on $\alpha$?

It turns out that the connection to P\'{o}lya's urn has a far reaching
generalisation. The theory of partial exchangeability guarantees that
ERRW is a random walk in random environment~\cite{MR577313}. A
consequence is that $\frac{1}{n} L_{n}$ has a distributional limit: it
is the law of the random environment. Coppersmith and Diaconis
discovered that one can give an explicit formula for the limiting law
on \emph{any} finite graph. It is surprising that an explicit formula
can be obtained; this explains why it has been termed the \emph{magic
  formula}, see \cite{MR1832379,MR2441859}.

To precisely formulate this result, recall an \emph{environment} is a
set of conductances $C\colon E\to [0,\infty)$ with
$\sum_{ij}C_{ij}=1$. Write $C_{ij}$ for the conductance of the edge
$\{i,j\}$ and $C_{i} = \sum_{j}C_{ij}$. Associated to $C$ is a
reversible Markov chain (simple random walk) with transition
probabilities $C_{ij}/C_{i}$ whose law we denote by
$\mathbb{P}_0^{\text{SRW}(C)}$ when started from $0$.

\begin{theorem}[Magic formula for ERRW]
  \label{thm:errw-magic}
  Let $G=(\Lambda,E)$ be finite. Edge-reinforced random walk with
  $X_{0}=0$ and initial weights $\alpha= (\alpha_{ij})$ is a random
  walk in random environment:
\begin{equation}
  \label{eq:RWRE}
  \mathbb{P}^{\text{ERRW}(\alpha)}_0[\cdot] = \int \mathbb{P}^{\text{SRW}(C)}_0[\cdot] \,d\mu_\alpha(C).
\end{equation}
  The environment $\mu_\alpha$ has density proportional to
  \begin{equation}
    \label{eq:errw-magic}
    C_{0}^{\frac{1}{2}}
    \frac{\prod_{ij\in
        E}C_{ij}^{\alpha_{ij}-1}}{
      \prod_{i\in\Lambda}
      C_{i}^{\frac{1}{2}(\alpha_{i}+1)}} \sqrt{\det C}
  \end{equation}
  with respect to Lebesgue measure on the unit simplex in
  $[0,\infty)^{E}$, and where
  $\alpha_{i}=\sum_{j}\alpha_{ij}$. 
\end{theorem}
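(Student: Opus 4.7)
The plan is to combine partial exchangeability of ERRW with an explicit computation of finite-path probabilities, and then match with a Dirichlet-type integral identity.

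First, I would verify that ERRW is partially exchangeable: for any vertex sequence $(x_{0},\ldots,x_{n})$ with $x_{0}=0$, the probability $\mathbb{P}^{\text{ERRW}(\alpha)}_{0}[X_{k}=x_{k},\,0\le k\le n]$ depends on the sequence only through $x_{n}$ and the edge-traversal counts $n_{ij}=|\{k<n:\{x_{k},x_{k+1}\}=\{i,j\}\}|$. Writing this probability as a telescoping product of \eqref{eq:ERRW}, the numerator factors contributed by edge $ij$ are exactly $\alpha_{ij},\alpha_{ij}+1,\ldots,\alpha_{ij}+n_{ij}-1$ in some order, while the denominator factors depend only on the vertex visit counts, which are themselves determined by $(n_{ij})$ and the endpoints.

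Second, by the Diaconis--Freedman theorem for partially exchangeable processes \cite{MR577313}, this implies ERRW is a mixture of reversible Markov chains on $G$; that is, \eqref{eq:RWRE} holds for a \emph{unique} probability measure $\mu_{\alpha}$ on the simplex of conductances.

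Third, I would identify $\mu_{\alpha}$ by matching path probabilities on both sides. Telescoping \eqref{eq:ERRW} over a loop $\gamma$ at $0$ with counts $(n_{ij})$ yields
\begin{equation*}
  \mathbb{P}^{\text{ERRW}(\alpha)}_{0}[\gamma]
  = N(\gamma)\,\frac{\prod_{ij\in E}\prod_{\ell=0}^{n_{ij}-1}(\alpha_{ij}+\ell)}
                 {\prod_{i\in\Lambda}\prod_{\ell=0}^{d_{i}-1}(\alpha_{i}+\ell)},
\end{equation*}
where $d_{i}$ is the number of departures from $i$ along $\gamma$ and $N(\gamma)$ is a combinatorial factor depending only on $(n_{ij})$ and $x_{n}$. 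The corresponding quantity $\int \mathbb{P}^{\text{SRW}(C)}_{0}[\gamma]\,d\mu_{\alpha}(C)$ under the conjectured density \eqref{eq:errw-magic} is a Dirichlet-type integral in the conductances, which after expansion of $\sqrt{\det C}$ via the matrix--tree theorem as a weighted sum over spanning trees of $G$ reduces to a ratio of Gamma functions; matching with the expression above would pin down $\mu_{\alpha}$ as claimed.

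The main obstacle is this last combinatorial identity. One must show that the weighted spanning-tree sum produced by $\det C$, combined with the boundary factor $C_{0}^{1/2}\prod_{i}C_{i}^{-(\alpha_{i}+1)/2}$, exactly reproduces the counting factor $N(\gamma)$, which enumerates sequences of edge traversals with prescribed counts starting at $0$. This matching is closely related to the BEST theorem's enumeration of Eulerian circuits, and is the source of the formula's \emph{magic}.
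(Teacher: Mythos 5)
The paper itself does not supply a proof of Theorem~\ref{thm:errw-magic}; it states the result and cites the papers of Keane--Rolles and Merkl--\"Ory--Rolles~\cite{MR1832379,MR2441859}, so there is no ``paper's own proof'' to compare against. Judging your proposal on its own merits: the first two steps are correct and are indeed the standard opening moves. Partial exchangeability follows exactly as you describe, and the Diaconis--Freedman theorem~\cite{MR577313} (applicable here because an ERRW on a finite graph is recurrent, so partial exchangeability does upgrade to a mixture of reversible Markov chains) gives the unique mixing measure $\mu_\alpha$ in~\eqref{eq:RWRE}.

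The problem is the third step. You propose to verify~\eqref{eq:errw-magic} by matching the ERRW path probability of a loop $\gamma$ against $\int \mathbb{P}^{\text{SRW}(C)}_0[\gamma]\,d\mu_\alpha(C)$, and to handle the $\sqrt{\det C}$ factor by ``expansion via the matrix-tree theorem.'' This does not work as stated: the matrix-tree theorem expands $\det C$ itself (a principal cofactor of the weighted Laplacian) as a polynomial $\sum_T \prod_{e\in T} C_e$ over spanning trees, but $\sqrt{\det C}$ is the \emph{square root} of that polynomial and has no such finite expansion. So the integrand is not a ratio of monomials, the integral is not of Dirichlet type, and the claimed ``ratio of Gamma functions'' does not materialize. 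This is not a minor technical detail: the square root is the genuinely hard part of the magic formula (and, as the paper later explains, its origin is the Gaussian integration over horospherical $s$- and Grassmann variables in the $\mathbb{H}^{2|2}$ sigma model). The published proofs instead take the following route for the last step: rather than matching path integrals against the conjectured density, they compute the limiting distribution of the empirical occupation fractions $L_n/n$ directly, by a careful asymptotic analysis of the explicit formula for $\mathbb{P}^{\text{ERRW}(\alpha)}_0[\gamma]$ in terms of Gamma ratios. There the determinant factor emerges as the Jacobian of a change of variables combined with a Gaussian (local-CLT-type) asymptotic for the fluctuations around the empirical fractions, not from a term-by-term matching with a spanning-tree sum. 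Your BEST-theorem heuristic correctly flags where the combinatorics should live, but it is the asymptotic/Jacobian mechanism, not a polynomial expansion of the square root, that produces the $\sqrt{\det C}$.

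One further small caution: in your loop-probability formula the denominator factors are not $\prod_{\ell=0}^{d_i-1}(\alpha_i+\ell)$. Since both a departure from $i$ and a return to $i$ each increment $\sum_k L_n^{ik}$, the normalizer at $i$ steps by $2$ between successive departures, so the product is of the form $\prod_{\ell=0}^{d_i-1}(\alpha_i+2\ell)$, with an offset at the starting vertex $0$. Getting this bookkeeping exactly right is what makes the final Gamma-ratio asymptotics match the factor $C_0^{1/2}\prod_i C_i^{-(\alpha_i+1)/2}$.
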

Sabot and Tarr\`{e}s showed how to relate the
density~\eqref{eq:errw-magic} to the $\mathbb{H}^{2|2}$ model that we
will introduce in the next section. This enabled them to leverage
powerful results of Disertori, Spencer, and Zirnbauer to establish the
existence of a recurrence/transience phase transition for ERRW on
$\mathbb{Z}^{d}$ for $d\geq 3$, see Section~\ref{sec:walks}.
In Section~\ref{sec:forests} we show that connection probabilities in
the arboreal gas, a stochastic-geometric model of random forests, can
be written in a form very similar to the magic formula.  The
derivation of this connection probability formula was inspired
by~\cite{MR4021254,MR4255180}, which (at least partially) revealed the
inner workings of the magic formula: horospherical coordinates
(hyperbolic symmetry) and supersymmetric localisation.

\section{Hyperbolic spin systems}
\label{sec:hyp}

This section introduces the hyperbolic spin systems that we will
discuss, briefly explains their characteristic symmetries, and
discusses how these symmetries manifest themselves if spontaneous
symmetry breaking occurs. For precise definitions of the
Grassmann and Berezin integrals that are used see, e.g., \cite[Appendix~A]{MR4255180}.

\paragraph{The $\mathbb{H}^{2|0}$ model}
The $\mathbb{H}^2 = \mathbb{H}^{2|0}$ model is defined as follows. We
consider the hyperbolic plane $\mathbb{H}^2$ realised as
$\mathbb{H}^2 = \{u = (x,y,z) \in \mathbb{R}^3: x^2+y^2-z^2 = -1, z
>0\}$ and equipped with the Minkowski inner product
$u \cdot u' = xx'+yy'-zz'$. For a finite graph $G=(\Lambda,E)$, we consider
one spin $u_i \in \mathbb{H}^2$ per vertex $i \in \Lambda$ and define
the action
\begin{equation}
  H_{\beta,h}(u) = \frac{\beta}{2} \sum_{ij \in E} (u_i-u_j)\cdot (u_i- u_j) + h \sum_{i\in\Lambda} z_i.
\end{equation}
The action also has a straightforward generalisation to edge- and vertex-dependent weights $\beta=(\beta_{ij})$
and $h=(h_i)$, and we will sometimes consider this case.
For $\beta>0$ and $h=0$, the minimisers
of $H_{\beta,0}$ are constant configurations
$u_i = u_j$ for all $i,j\in\Lambda$.
For $h>0$, the unique minimiser is $u_i = (0,0,1)$ for all $i$.
The $\mathbb{H}^{2}$ model is the
probability measure on spin configurations whose expectation is given,
for bounded $F\colon (\mathbb{H}^2)^\Lambda \to \mathbb{R}$, by
\begin{equation}
  \label{eq:H2hyp}
  \langle F\rangle_{\beta,h}
  =
  \frac{1}{Z_{\beta,h}}
  \int_{(\mathbb{H}^{2})^\Lambda}
  \prod_{i\in\Lambda} du_i \,
  F(u) \, e^{-H_{\beta,h}(u)}
\end{equation}
where $du_i$ stands for the Haar measure on $\mathbb{H}^2$ and
$Z_{\beta,h}$ is a normalisation.
Parametrising $u_i\in \mathbb{H}^2$ by $(x_i,y_i) \in \mathbb{R}^2$
with $z_i = \sqrt{1+x_i^2+y_i^2}$, we can explicitly
rewrite~\eqref{eq:H2hyp} as
\begin{equation}
  \label{eq:H2flat}
  \langle F\rangle_{\beta,h}
  =
  \frac{1}{Z_{\beta,h}}
  \int_{(\mathbb{R}^{2})^\Lambda}
  \prod_{i\in\Lambda} \frac{dx_i \, dy_i}{z_i} \,
  F(u) \, e^{-H_{\beta,h}(u)}
  .
\end{equation}
The expectation is only normalisable if $h>0$ (or more generally
$h_i>0$ for some vertex $i$) due to the non-compactness of $\mathbb{H}^{2}$.
It is useful to construct a version with $h=0$ in which the field is
fixed (pinned) at some distinguished vertex $0$.
We denote the \emph{pinned expectation} with pinning
$u_0=(0,0,1)$ by $\langle\cdot\rangle_\beta^0$.

\paragraph{The $\mathbb{H}^{0|2}$ model}

Now we consider the Grassmann algebra $\Omega_\Lambda$ generated by two generators $\xi_i$ and $\eta_i$
per vertex $i \in\Lambda$ and set
\begin{equation}
  z_i = \sqrt{1-2\xi_i\eta_i} = 1-\xi_i\eta_i,
\end{equation}
and unite these into the formal supervector $u_i = (\xi_i,\eta_i,z_i)$. Thus $u_i$ has two odd (anticommuting) components and one even (commuting) component.
We define $u_i\cdot u_j = -\xi_i\eta_j-\xi_j\eta_i -z_iz_j$, which is again an element of $\Omega_\Lambda$.
These definitions are such that $u_i\cdot u_i = -1$, as in the case of
$\mathbb{H}^2$ spins. Define 
\begin{equation}
  H_{\beta,h}
  = \frac{\beta}{2}  \sum_{ij \in E} (u_i-u_j)\cdot (u_i- u_j) + h \sum_{i\in\Lambda} z_i.
\end{equation}
For $F$ a polynomial in the $\xi_i$ and $\eta_i$ set
\begin{equation}
  \label{eq:H02}
  \langle F\rangle_{\beta,h}
  =
  \frac{1}{Z_{\beta,h}}
  \int \left(  \prod_{i\in\Lambda} \partial_{\eta_i} \partial_{\xi_i} \frac{1}{z_i} \right)
  F \, e^{-H_{\beta,h}}
  ,
\end{equation}
where $\int \prod_{i\in\Lambda} \partial_{\eta_i} \partial_{\xi_i}$ stands for the Grassmann integral,
i.e., the top coefficient of the element of the Grassmann algebra to its right.
For example,
\begin{equation}
  \label{eq:H02ex}
  \int \partial_{\xi}\partial_\eta \, e^{-\xi\eta} =
  \int \partial_{\xi}\partial_\eta (1-\xi\eta) =
  \int \partial_{\xi}\partial_\eta \, \eta\xi =  1.
\end{equation}
In~\eqref{eq:H02} and~\eqref{eq:H02ex}
we have used the convention that smooth functions of commuting
elements of the algebra are defined by Taylor expansion. By nilpotency
the expansion is finite, i.e., a polynomial.  The $\mathbb{H}^{0|2}$
model is the expectation~\eqref{eq:H02}; while this is not a
probabilistic expectation, we will soon see that it often carries
probabilistic interpretations. Generalisations to edge- and
vertex-dependent weights and pinning are straightforward. 

\paragraph{The $\mathbb{H}^{2|2}$ model}

The $\mathbb{H}^{2|2}$ model is defined as the $\mathbb{H}^{0|2}$ model was,
but now beginning with three commuting components $x_{i},y_{i},z_{i}$.
Formally, this means the real coefficients of the Grassmann algebra
$\Omega_{\Lambda}$ of the previous section are replaced by smooth
functions of $x_{i}$ and $y_{i}$. To each vertex $i$ we associate a
formal supervector 
$u_{i} = (x_{i},y_{i},\xi_{i},\eta_{i},z_{i})$, where $x_{i}$ and
$y_{i}$ are commuting, $\xi_{i}$ and $\eta_{i}$ are generators of a
Grassmann algebra, and
\begin{equation}
  \label{e:H22z}
  z_{i} = \sqrt{1+x_{i}^{2}+y_{i}^{2}-2\xi_{i}\eta_{i}} =
  \sqrt{1+x_{i}^{2}+y_{i}^{2}} - \frac{\xi_{i}\eta_{i}}{\sqrt{1+x_{i}^{2}+y_{i}^{2}}}.
\end{equation}
As for $\mathbb{H}^{0|2}$, smooth functions of commuting elements of
this algebra are defined by Taylor expansion, with the expansion now
performed about $(x_{i},y_{i})\in \mathbb{R}^{2\Lambda}$; the second
equality of~\eqref{e:H22z} is an example.

The definition \eqref{e:H22z} ensures that $z_{i}$ has positive
degree zero part,
and that $u_{i}\cdot u_{i}=-1$ for the super inner product
$u_{i}\cdot u_{j} = x_{i}x_{j}+y_{i}y_{j}-\xi_{i}\eta_{j}-\xi_{j}\eta_{i}-z_{i}z_{j}$.
As previously, we define
\begin{equation}
  \label{eq:H22H}
  H_{\beta,h}(u) = \frac{\beta}{2}  \sum_{ij\in E}(u_{i}-u_{j})\cdot
  (u_{i}-u_{j}) + h\sum_{i\in\Lambda}z_{i},
\end{equation}
and the associated expectation
\begin{equation}
  \label{eq:H22E}
  \langle F \rangle_{\beta,h} =
  \frac{1}{(2\pi)^{|\Lambda|}}
  \int
  \left(\prod_{i\in\Lambda}dx_{i}dy_{i}\partial_{\eta_{i}}\partial_{\xi_{i}}\frac{1}{z_{i}}\right)
  F e^{-H_{\beta,h}}.
\end{equation}
This integral combines ordinary integration and Grassmann integration
and is an instance of the Berezin integral, sometimes call a super integral
\cite{MR914369}.  One computes the Grassmann integral to obtain the
top coefficient of the element of the Grassmann algebra; this is a
smooth function on $\mathbb{R}^{2\Lambda}$. One then computes the
ordinary Lebesgue integral of this function.  Again the generalisation
to edge- and vertex-dependent weights and pinning is straightforward.

Note that~\eqref{eq:H22E} does not have a normalising factor
as in the definitions of the $\mathbb{H}^{2}$ and $\mathbb{H}^{0|2}$
models, aside from the factor $(2\pi)^{-|\Lambda|}$ that does not
depend on the weights.  Nonetheless the expectation is normalised:
$\langle 1 \rangle_{\beta,h}=1$ if $h>0$.  This is due to an internal
supersymmetry in the model, which implies
$Z_{\beta,h}=(2\pi)^{|\Lambda|}$. More generally, this supersymmetry
implies a powerful localisation principle first used in this context
in~\cite{MR2728731}.

\begin{theorem}[SUSY localisation for $\mathbb{H}^{2|2}$]
  \label{thm:susyloc}
  For $F\colon
  \mathbb{R}^{\Lambda}\times\mathbb{R}^{\Lambda\times\Lambda}\to
  \mathbb{R}$ smooth and with sufficient decay,
  and for all edge- and vertex-dependent weights $\beta=(\beta_{ij})$ and $h=(h_i)$ with some $h_i>0$,
  \begin{equation}
    \label{eq:susyloc}
    \langle F((z_i), (u_{i}\cdot u_{j})) \rangle_{\beta,h} = F(1,-1).
  \end{equation}
\end{theorem}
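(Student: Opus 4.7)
The plan is to prove Theorem~\ref{thm:susyloc} by supersymmetric localisation, following the strategy originally developed by Disertori, Spencer, and Zirnbauer~\cite{MR2728731}. The central ingredient is a nilpotent Grassmann-odd derivation $Q$ on the superalgebra of functions on $(\mathbb{R}^{2}\oplus\Omega^{2})^{\Lambda}$ with three properties: (i) $Q$ preserves the Berezin reference measure $\prod_{i}\frac{1}{z_{i}}\,dx_{i}\,dy_{i}\,\partial_{\eta_{i}}\partial_{\xi_{i}}$, yielding the Ward identity $\langle QG\rangle_{\beta,h}=0$ for sufficiently decaying $G$; (ii) $Q$ annihilates the invariants, i.e.\ $Qz_{i}=0$ and $Q(u_{i}\cdot u_{j})=0$ for all $i,j\in\Lambda$; and (iii) the common zero locus of $Q$ is the single configuration $x_{i}=y_{i}=\xi_{i}=\eta_{i}=0$, at which $z_{i}=1$ and $u_{i}\cdot u_{j}=-1$. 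A convenient concrete realisation is to pass first to horospherical coordinates on $\mathbb{H}^{2|2}$, in which $z_{i}$ and the Berezin measure take a particularly simple form, and then to take $Q$ to be an odd translation of the horospherical fermions. The identity $Qz_{i}=0$ is then an immediate consequence of the super chain rule applied to~\eqref{e:H22z}, and the factor $1/z_{i}$ in~\eqref{eq:H22E} is precisely what renders the super-divergence of $Q$ identically zero.

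Granted (i)--(iii), the argument is an interpolation in an auxiliary parameter. By (ii), $F((z_{i}),(u_{i}\cdot u_{j}))$ is $Q$-invariant, and since $H_{\beta,h}$ depends on the configuration only through $z_{i}$ and $u_{i}\cdot u_{j}$ (via $(u_{i}-u_{j})\cdot(u_{i}-u_{j})=-2u_{i}\cdot u_{j}-2$), so is $e^{-H_{\beta,h}}$. Select an odd superfunction $\lambda$ such that $Q\lambda$ is non-negative with a non-degenerate quadratic minimum at the fixed point of $Q$ and with $Q^{2}\lambda=0$ (automatic if $Q^{2}=0$). Define
\begin{equation}
\Phi(t) = \bigl\langle F\bigl((z_{i}),(u_{i}\cdot u_{j})\bigr)\, e^{-tQ\lambda}\bigr\rangle_{\beta,h},\qquad t\geq 0.
\end{equation}
A short super-Leibniz calculation using $QF=Q(e^{-H_{\beta,h}})=Q^{2}\lambda=0$ gives $Q\bigl(F\lambda\, e^{-tQ\lambda}\bigr)=F\,(Q\lambda)\,e^{-tQ\lambda}$, and hence by the Ward identity from (i),
\begin{equation}
\Phi'(t)
= -\bigl\langle F\,(Q\lambda)\,e^{-tQ\lambda}\bigr\rangle_{\beta,h}
= -\bigl\langle Q\bigl(F\lambda\,e^{-tQ\lambda}\bigr)\bigr\rangle_{\beta,h}
= 0.
\end{equation}
Therefore $\Phi$ is constant. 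At $t=0$ it equals the left-hand side of~\eqref{eq:susyloc}; as $t\to\infty$, the weight $e^{-tQ\lambda}$ concentrates at the fixed point of $Q$, and the exact super stationary-phase principle (Parisi--Sourlas/super Duistermaat--Heckman) collapses the integral to $F(1,-1)$, the bosonic Gaussian and fermionic determinants cancelling precisely.

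The principal obstacle is not conceptual but algebraic and analytic. One must exhibit an explicit $Q$ and $\lambda$ verifying (i)--(iii) and $Q^{2}\lambda=0$; the crucial check is that the super-divergence of $Q$ with respect to the measure in~\eqref{eq:H22E} vanishes, which is where the specific form~\eqref{e:H22z} of $z_{i}$ is essential. The $t\to\infty$ limit must be legitimised by the decay hypothesis on $F$ together with the assumption that some $h_{i}>0$, which guarantees integrability in the noncompact bosonic directions and rules out boundary contributions at infinity that would otherwise break the Ward identity. The special case $F\equiv 1$ simultaneously yields the normalisation $\langle 1\rangle_{\beta,h}=1$ and the value $Z_{\beta,h}=(2\pi)^{|\Lambda|}$ asserted in the text.
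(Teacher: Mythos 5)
The paper does not give a proof of Theorem~\ref{thm:susyloc}; it only cites~\cite{MR2728731}. Your abstract framework (i)--(iii) together with the Witten interpolation $\Phi(t)=\langle F\,e^{-tQ\lambda}\rangle$ is indeed the standard way to make SUSY localisation rigorous, so the \emph{shape} of the argument is right. The problem is in your proposed concrete realisation, and it is not a minor detail.

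You propose to ``pass first to horospherical coordinates \dots\ and then take $Q$ to be an odd translation of the horospherical fermions.'' Such a $Q$ acts only on $(s_i,\psi_i,\bar\psi_i)$ and fixes the radial variables $t_i$, so its fixed-point locus is the entire slice $\{s=\psi=\bar\psi=0\}\cong\mathbb{R}^{\Lambda}_{t}$, not the single configuration required in your~(iii). Localising would at best produce an integral over $t$ of $F((\cosh t_i),(-\cosh(t_i-t_j)))$, which is manifestly not $F(1,-1)$ for generic~$F$. Worse, the horospherical super-system is not balanced: per vertex there is one boson $s_i$ and two fermions $\psi_i,\bar\psi_i$. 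The per-vertex identity that would be needed for your $\sigma\to\infty$ step, namely $\int ds\,\partial_{\bar\psi}\partial_\psi\,g\bigl(\tfrac12 s^2+\psi\bar\psi\bigr)=g(0)$, is \emph{false} (take $g(v)=e^{-v}$: the left side is $-\sqrt{2\pi}$, not $1$), so the one-loop bosonic and fermionic determinants do not cancel and the Witten interpolation fails to converge in the $\sigma\to\infty$ limit. You also conflate coordinate systems: $z_i$ from~\eqref{e:H22z} and the $1/z_i$ factor in~\eqref{eq:H22E} live in the flat $(x,y,\xi,\eta)$ picture, while ``horospherical fermions'' live in the other.

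The fix is to stay in flat $\mathbb{R}^{2|2}$ coordinates and use as $Q$ an odd generator of the isotropy (super)algebra of the point $(0,0,0,0,1)$, acting simultaneously at every vertex. There the per-vertex super-system is balanced ($2|2$), the common fixed point of the odd isotropy generators is precisely $x_i=y_i=\xi_i=\eta_i=0$, and the single-vertex localisation lemma $\tfrac{1}{2\pi}\int dx\,dy\,\partial_\eta\partial_\xi\,\tfrac{g(z)}{z}=g(1)$ (an elementary integration by parts) is what propagates, via the SUSY Ward identity and an interpolation of exactly the type you describe, to the multi-vertex statement~\eqref{eq:susyloc}. Until the choice of $Q$, the uniqueness of its fixed point, and the convergence of the $\sigma\to\infty$ limit are established in that setting, the proposal has a genuine gap.
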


On the right-hand side of~\eqref{eq:susyloc}, $1$ stands for the vector in
$\mathbb{R}^\Lambda$ with all entries equal to $1$, and similarly for
$-1$.  For example, $\langle z_{i}\rangle_{\beta,h}=1$ and
$\langle u_i\cdot u_j \rangle_{\beta,h}=-1$.

\paragraph{Beyond: $\mathbb{H}^{n|2m}$}

There is a natural generalisation of the above models to the broader
class of $\mathbb{H}^{n|2m}$ models with $n+1$ commuting coordinates
and $2m$ anticommuting coordinates.  Generalising
Theorem~\ref{thm:susyloc}, there is an exact correspondence between
observables of the $\mathbb{H}^{n|2m}$ and $\mathbb{H}^{n+2|2m+2}$
models, see \cite[Section~2]{MR4218682}. 
For developments when
$n=0$, see~\cite{1912.05817}.

\paragraph{Symmetries}
The $\mathbb{H}^{n|2m}$ models have continuous symmetries which are
analogues of the rotations of the $O(n)$ models.  For example, for the
hyperbolic plane $\mathbb{H}^2$, these symmetries are Lorentz boosts
and rotations.  The infinitesimal generator of Lorentz boosts in the
$xz$-plane is the linear differential operator $T$ acting as
\begin{equation}
  \label{eq:Lorentz}
  Tz=x, \quad Tx=z,\quad Ty=0.
\end{equation}
If $\mathbb{H}^{2}$ is parametrised by $(x,y)\in \mathbb{R}^{2}$, then
$T = z\partial_{x}$.  For the hyperbolic sigma models, there is an
infinitesimal boost $T_{i}=z_{i}\partial_{x_{i}}$ at each vertex $i$.
Haar measure on $\mathbb{H}^{2}$ and the action $H_{\beta,0}$ with
$h=0$ are invariant under these symmetries, i.e.,
$\sum_i T_i H_{\beta,0}=0$.  Analogous symmetries exist for
$\mathbb{H}^{0|2}$ and $\mathbb{H}^{2|2}$.  If $h>0$ then
$\sum_i T_i H_{\beta,h}\neq 0$, and the symmetries are said to be
explicitly broken by the external field.  Important consequences of
these symmetries are Ward identities. For example, when $n>0$ (such as
for the $\mathbb{H}^2$ and $\mathbb{H}^{2|2}$ models), for $h>0$,
\begin{equation} \label{e:ward1}
  \frac{\langle z_i \rangle_{\beta,h}}{h} = \sum_j \langle x_i x_j \rangle_{\beta,h},
\end{equation}
and when $m>0$ (such as for the $\mathbb{H}^{2|2}$ and $\mathbb{H}^{0|2}$ models),
\begin{equation} \label{e:ward2}
  \frac{\langle z_i \rangle_{\beta,h}}{h} = \sum_j \langle \xi_i \eta_j \rangle_{\beta,h}.
\end{equation}
Here $x_i$ and $(\xi_i,\eta_i)$ stand for an even (bosonic)
coordinate and pair of odd (fermionic) coordinates
when $n,m>1$, respectively. The
proofs of these identities boil down to integration by parts, see
e.g., \cite[Appendix~B]{MR2728731} or~\cite[Lemma~2.3]{MR4218682}.

\paragraph{Spontaneous symmetry breaking}

For the $\mathbb{H}^{2}$ and $\mathbb{H}^{2|2}$ models on a fixed finite graph, it is a consequence of the non-compactness of the hyperbolic symmetry
that, for example, $\langle x_0^2 \rangle_{\beta,h}$ diverges as $h\downarrow 0$. Similarly, for the $\mathbb{H}^{0|2}$ model on a finite graph,
symmetry implies that $\langle z_0\rangle_{\beta,h}$ tends to $0$ as $h\downarrow 0$. One of the main questions of statistical physics is whether
a symmetry survives in the infinite volume limit, or if it is \emph{spontaneously broken}.
To make this precise, it is convenient to consider a
finite volume criterion for this question. Consider a sequence of finite graphs $\Lambda$ that approximate $\mathbb{Z}^d$ in a suitable way
(denoted $\Lambda\to \mathbb{Z}^d$),
and let $\langle \cdot \rangle_{\beta,h}$ be the corresponding finite volume expectations.
For the  $\mathbb{H}^{2}$ and $\mathbb{H}^{2|2}$ models, there is spontaneous symmetry breaking (SSB) for a given $\beta$ if
\begin{equation} \label{e:SSBH2}
  \lim_{h\downarrow 0} \lim_{\Lambda \to \mathbb{Z}^d} \langle x_0^2\rangle_{\beta,h} < \infty,
\end{equation}
and similarly for the $\mathbb{H}^{0|2}$ model there is SSB if
\begin{equation} \label{e:SSBH02}
  \lim_{h\downarrow 0} \lim_{\Lambda\to \mathbb{Z}^d} \langle z_0\rangle_{\beta,h} >0.
\end{equation}
These notions can be understood by noticing that when the two limits
are exchanged the inequalities do not hold: in finite volume the $h=0$
symmetries are restored in the $h\downarrow 0$ limit, while they are
not in infinite volume if SSB occurs.  There are other notions of SSB
for hyperbolic spin models, but the ones
in~\eqref{e:SSBH2}--\eqref{e:SSBH02} capture the relevant phenomena
from the perspective of the Anderson
transition~\cite[Section~4.2]{MR2728731} as well as from the
perspective of the associated probabilistic models, as will be
discussed in Sections~\ref{sec:walks} and~\ref{sec:forests}.

We briefly summarise when SSB occurs for the
$\mathbb{H}^{2}$, $\mathbb{H}^{0|2}$, and $\mathbb{H}^{2|2}$ models.
In $d=2$, \eqref{e:SSBH2} and \eqref{e:SSBH02} do not hold for any $\beta>0$.
These results are versions of the Mermin--Wagner theorem
\cite{MR4021254,MR4218029,1911.08579,MR2561431}.
The situation is different in $d\geq 3$.
For the $\mathbb{H}^2$ model, SSB occurs for any $\beta>0$ as a result of convexity~\cite{MR3469136}.
The $\mathbb{H}^{2|2}$ and $\mathbb{H}^{0|2}$ models, however, have phase transitions:
SSB in the form of \eqref{e:SSBH2} and \eqref{e:SSBH02}, respectively,
occurs in $d\geq 3$ if and only if $\beta$ is sufficiently large~\cite{MR2728731,2107.01878}.
Once SSB is known to occur (or not), 
it is interesting and physically relevant to ask more precise
questions, e.g., about 
the asymptotics of the correlation functions $\langle
x_ix_j\rangle_{\beta,h}$. 
Sections~\ref{sec:walks} and~\ref{sec:forests} will discuss SSB
and sharper questions for the $\mathbb{H}^{2|2}$ and
$\mathbb{H}^{0|2}$ models.

\paragraph{Horospherical coordinates}

Important tools the study of the above models are horospherical coordinates
for the superspaces $\mathbb{H}^{n|2m}$ with $n\geq 2$ \cite{MR2736958,MR2728731}.
For the hyperbolic plane $\mathbb{H}^2$ these are coordinates $(t,s) \in \mathbb{R}^2$ such that
\begin{equation}
  x=\sinh(t)-\frac12 e^t|s|^2, \qquad y=e^ts, \qquad z=\cosh(t)+\frac12 e^t |s|^2.
\end{equation}
For the space $\mathbb{H}^n$ these coordinates generalise by taking
$s= (s^{i}) \in \mathbb{R}^{n-1}$.  For the
superspaces $\mathbb{H}^{n|2m}$ in addition there are $m$ pairs
Grassmann coordinates
$\psi = (\psi^{i}),\bar\psi =(\bar\psi^{i})$ such that
\begin{equation} 
  x=\sinh(t)-\frac12 e^t|s|^2- e^t\psi\bar\psi, \;\;
  y=e^ts, \;\;
  \xi = e^t\psi,\;\;
  \eta=e^t\bar \psi, \;\;
  z=\cosh(t)+\frac12 e^t |s|^2+e^t\psi\bar\psi,
\end{equation}
where we are using the abbreviation
$\psi\bar\psi = \sum_{i=1}^m \psi^i\bar\psi^i$ if there are $m$
Grassmann components.  In these coordinates the action becomes
\begin{align} \label{e:Hhoro}
  H_{\beta,h} &= \beta \sum_{ij} \left( (\cosh(t_i-t_j) -1) + \frac{1}{2} e^{t_i+t_j} |s_i-s_j|^2 + e^{t_i+t_j} (\psi_i-\psi_j)(\bar\psi_i-\bar\psi_j) \right)
  \nonumber\\
  &\;\;+ h \sum_i \left( (\cosh(t_i)-1)+\frac12 e^{t_i} |s_i|^2 +  e^{t_i} \psi_i\bar\psi_i \right),
\end{align}
and the hyperbolic reference measure is
$dt \, ds \, \partial_{\bar \psi} \, \partial_{\psi}\,
e^{(n-2m-1)\sum_i t_i}$, where $\partial_{\bar \psi} \partial_{\psi}$
denotes Grassmann integration if $m>0$.  A crucial feature
of~\eqref{e:Hhoro} is that the $s$ and $\psi,\bar\psi$ variables
appear quadratically in $H_{\beta,h}$ and hence can be integrated out
via exact Gaussian computations.
The $t$-marginal is thus proportional to
the \emph{positive} measure $e^{-\tilde H_{\beta,h}} \, dt$ where
\begin{align} \label{e:tildeH}
  \tilde H_{\beta,h}(t)
  &=
    \beta \sum_{ij} (\cosh(t_i-t_j)-1) + h \sum_i (\cosh(t_i)-1)
    \nonumber\\
  &\;\;
      + \frac{n-2m-1}{2} (\log \det (-\Delta_{\beta(t)}+h(t))-2\sum_i t_i).
\end{align}
In~\eqref{e:tildeH} $-\Delta_{\beta(t)}+h(t)$ is the $t$-dependent matrix acting as
\begin{equation} \label{e:Deltabetat}
  (-\Delta_{\beta(t)}f+h(t) f)_i = -\sum_{j\sim i} \beta e^{t_i+t_j}
  (f_j-f_i) + h e^{t_i} f_{i},
\end{equation}
with the $t$-dependent weights $\beta_{ij}(t)=\beta e^{t_{i}+t_{j}}$
and $h_{i}(t)=he^{t_{i}}$; this generalises immediately to edge- and
vertex-dependent weights.  The determinant in~\eqref{e:tildeH} arises
from the Gaussian integration over the $s$ and $\psi,\bar\psi$
variables.  Since the $t$-field is distributed according to a positive
measure, one can use standard tools from analysis.  This is useful
since, e.g., for all $\mathbb{H}^{n|2m}$ models,
\begin{equation}
  \langle z_i \rangle_{\beta,h} =
  \langle z_i+x_i \rangle_{\beta,h} =
  \langle e^{t_i} \rangle_{\beta,h},
\end{equation}
where the first identity used that $\langle x_i \rangle_{\beta,h} =0$, by symmetry.
For the pinned expectations, analogous
representations hold with $h=0$ and $t_0=0$.

\section{Physical background: Anderson transition}
\label{sec:random-band-matrices}

This section briefly discusses the origins of hyperbolic spin systems
as simplified models for the Anderson delocalisation--localisation
transition.  For a more detailed survey about this, we refer in
particular to \cite{MR3204347}.  Further excellent surveys
include~\cite{MR2953867} and \cite{MR708812,MR1843511} for a physics
perspective.  For general background on the Anderson transition, see
\cite{MR3364516}.

Consider a random matrix $H = (H(i,j))_{i,j\in\Lambda}$ such as the
Anderson Hamiltonian $H= H_\beta = -\beta\Delta + V$ where
$V=(V_i)_{i\in\Lambda}$ is an i.i.d.\ Gaussian potential, 
$\Lambda$ is a discrete torus approximating $\mathbb{Z}^d$,
and $\Delta$ is the lattice Laplacian on $\Lambda$.  The
fundamental question is to determine whether or not the spectrum of
$H$ contains an absolutely continuous part in the infinite volume
limit, and very closely related to this, if the eigenfunctions 
(often called states in this context) of $H$ are extended or localised.
Extended states correspond to a metallic phase while localised states
correspond to an insulating phase.  To discuss this
further, define the two-point correlation function 
\begin{equation}
  \label{eq:2pt}
  \tau_{\beta,E,h}(j,k) = \mathbb{E} |(H_\beta-E-ih)^{-1}(j,k)|^2, \quad
  j,k\in \Lambda,
\end{equation}
where $i=\sqrt{-1}$.  The existence of extended states for energies
near $E$ is essentially implied by
$\lim_{h\downarrow 0}\lim_{\Lambda\to \mathbb{Z}^d}
\tau_{\beta,E,h}(j,j) < \infty$.  For the Anderson model it is a
long-standing conjecture that this occurs in $d \geq 3$ for $E$ inside
the spectrum of $-\beta\Delta$ when $\beta$ is sufficiently large.  In
the same setting, the more precise quantum diffusion conjecture
asserts
\begin{equation}
  \lim_{h\downarrow 0} \lim_{\Lambda\to \mathbb{Z}^d} \tau_{\beta,E,h}(j,k) \approx
  D(E,\beta)(-\Delta)^{-1}(j,k) \sim C(E,\beta)|j-k|^{-(d-2)}, \quad
  j,k\in \mathbb{Z}^{d}
\end{equation}
for some constants $C,D$, and where the asymptotics hold for
$|j-k|\to\infty$.  This gives a hint that the conjecture might be
difficult: the two-point function decays slowly, like that of the
massless Gaussian free field.  Such behaviour also occurs for
fluctuations of spontaneously broken continuous symmetries (Goldstone
modes). In \cite{10.1007/BF01319839,MR708812} it was argued that
the origin of
extended states is SSB of a (complicated) spin model with hyperbolic symmetry,
and that quantum diffusion is exactly the associated Goldstone mode.
The spin model 
is based on the supersymmetric approach to the replica trick for
computing the two-point function.

We briefly indicate some parallels between the present discussion and
Section~\ref{sec:hyp}.  The elementary identity
\begin{equation}
  \frac{1}{h} \mathbb{E} \operatorname{Im} (H_\beta-E-ih)^{-1}(j,j) = \sum_{k} \mathbb{E} |(H_\beta-E-ih)^{-1}(j,k)|^2,
\end{equation}
which is also valid without expectations, is analogous to the Ward
identities \eqref{e:ward1}--\eqref{e:ward2}.  Thus the role of
$\langle z_j \rangle$ is played by
$\mathbb{E} \operatorname{Im} (H_\beta-E-ih)^{-1}(j,j)$. In the limit
$h\downarrow 0$ this is $\pi$ times the density of states $\rho(E)$,
i.e., the asymptotic eigenvalue distribution. The role of the
two-point functions $\langle x_jx_k \rangle$ or
$\langle \xi_j\eta_k\rangle$ is played by
$\tau_{\beta,E,h}(j,k)=\mathbb{E} |(H_\beta-E-ih)^{-1}(j,k)|^2$.  The
absolute values in the latter correlation function are essential and
the origin of the hyperbolic symmetry~\cite[Section~2.3]{MR3204347}.
The noncompactness of the hyperbolic symmetry manifests itself in the
high temperature phase: the unboundedness of $\tau_{\beta,E,h}(j,k)$
as $h\downarrow 0$ signals an absence of delocalisation. The stronger
notion of localisation corresponds to
\begin{equation}
  \tau_{\beta,E,h}(j,k) \approx \frac{e^{-c|j-k|}}{h}.
\end{equation}
The divergence as $h\downarrow 0$ is analogous to the behaviour of the
$\mathbb{H}^{2|2}$, see Section~\ref{sec:hyp}, and is different from
that of spin systems with compact symmetry.  For further discussion,
see \cite{MR3204347}.

\paragraph{Dictionary}
\label{sec:dictionary}
The analogies between the expected behaviours of the Anderson model and
the probabilistic models described in the next two sections is
summarised below. In $d=2$, $\beta_{c}=\infty$, while
$\beta_{c}<\infty$ for $d\geq 3$.
\begin{center}
\begin{tabular}{c c c} 
  \toprule
  & $\beta<\beta_{c}$ & $\beta>\beta_{c}$ \\
  \midrule
  Anderson Model & localised (insulating) phase & extended (metallic) phase \\
  VRJP & positive recurrent phase & transient phase \\
  Arboreal gas & subcritical percolation phase & percolating phase \\
 \bottomrule
\end{tabular}
\end{center}
Logically there is the possibility of non-extended states that are not
localised, which would correspond to a null-recurrent phase for the
VRJP and a phase of the arboreal gas where infinite clusters do not
occur, but the cluster size distribution has infinite mean.

\section{Linearly reinforced walks and $\mathbb{H}^{2|2}$}
\label{sec:walks}

Formulas arising in the study of the $\mathbb{H}^{2|2}$ model (e.g.,
\eqref{e:Deltabetat}) have interpretations in terms of random walks,
and similarities with ERRW did not go
unnoticed~\cite[Section~1.5]{MR2728731}. This was given an explanation
by Sabot and Tarr\`{e}s~\cite{MR3420510}; the explanation passes
through another reinforced random walk, which we now introduce.  Fix
edge weights $\beta_{ij}> 0$ for each edge $ij\in E$, and set
$\beta_{ij}=0$ if $ij \not\in E$.  The \emph{vertex-reinforced jump
  process (VRJP)} with $X_{0}=0$ is the continuous-time
self-interacting random walk with transition probabilities
\begin{equation}
  \label{eq:VRJP}
  \mathbb{P}^{\text{VJRP}(\beta)}_{0}[X_{t+dt}=j | (X_{s})_{s\leq t},X_{t}=i] = \beta_{ij}
  L^{j}_{t}, \quad L^{j}_{t} = 1+ \int_{0}^{t}1_{X_{s}=j}\,ds.
\end{equation}
The quantity $L^{j}_{t}$ is the \emph{local time} at $j$ at time $t$,
up to the shift by $1$.  In words, then, conditionally on the shifted
local times at time $t$ and that $X_{t}=i$, a VRJP jumps to site $j$
with probability proportional to $\beta_{ij}L^{j}_{t}$.  Thus previously vertices
visited are preferred.  The amount of local time accrued at $i$
before jumping away has the distribution of an exponential random variable with rate
$\sum_{j}\beta_{ij}L^{j}_{t}$. With this in mind, large edge weights
$\beta_{ij}$ heuristically correspond to weak reinforcement: jumps
occur quickly and do not alter the local time profile too much.

Sabot and Tarr\`{e}s gave an exact formula for the (properly scaled)
limiting local times of the VRJP, and explained that this distribution
is also the distribution of the $t$-field of the $\mathbb{H}^{2|2}$
model. They further showed that the magic formula for the ERRW follows
from this result, see Section~\ref{sec:further-walks} below.
Similarly to the ERRW, the VRJP can be expressed as a continuous-time
random walk in a random environment. The next theorem is a slightly
informal statement of this result. The precise formulation requires
looking at the VRJP in the correct time parameterisation;
see~\cite{MR3420510}. For a symmetric square matrix $A$ with
$\sum_j A_{ij}=0$ for all rows $i$, we write ${\det}^0(A)$ for the
value of any principal cofactor of $A$, e.g., the determinant with the
first row and column of $A$ removed.

\begin{theorem}[Magic formula for VJRP~\cite{MR3420510}]
  \label{thm:VRJP-magic}
  Let $G=(\Lambda,E)$ be a finite graph with $|\Lambda|=N$.
  In the exchangeable time parameterisation of the VRJP,
  \begin{equation}   \label{e:VRJP-magic}
    \mathbb{P}^{\text{VJRP}(\beta)}_{0}[\cdot] =
    (2\pi)^{-\frac{N-1}{2}}
    \int_{\mathbb{R}^{\Lambda\setminus 0}} \mathbb{P}^{\text{SRW}(c(t))}_0[\cdot] \, e^{-\frac\beta 2 \sum_{i,j} \cosh(t_i-t_j)} ({\det}^0(-\Delta_{\beta(t)}))^{\frac12} \prod_{k\in\Lambda\setminus0}\, e^{-t_k} \, dt_k.
  \end{equation}
  where $\mathbb{P}^{\text{SRW}(c(t))}_0$ is the distribution of a
  continuous-time simple random walk with conductances
  $c(t)_{ij} = \beta e^{t_i+t_j}$ started at $0$.
\end{theorem}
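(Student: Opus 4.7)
The plan is to follow Sabot-Tarrès: identify the VRJP in the exchangeable time parametrisation as a random walk in a random environment (RWRE), and then compute the environment's law explicitly. The starting point is to establish \emph{partial exchangeability}: after the specific time change defining this parametrisation, the sequence $(Y_n)$ of vertices visited has the property that the probability of any finite trajectory depends only on the transition count matrix $(n_{ij})$. The Diaconis-Freedman representation theorem then yields that $(Y_n)$ is a mixture of reversible Markov chains, and the VRJP in this time scale is correspondingly a mixture of continuous-time simple random walks. The linear dependence of the jump rates in \eqref{eq:VRJP} on local times, together with the exponential structure of the time change, forces the conductances to take the form $c(t)_{ij} = \beta e^{t_i+t_j}$ for some random field $(t_i)_{i\in\Lambda\setminus 0}$, with $t_0=0$ reflecting the pinning at the starting vertex.

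The next step is to identify the density of $(t_i)$. One equates two expressions for the VRJP probability of a fixed trajectory with terminal local times $(L^i)$: directly from \eqref{eq:VRJP}, it factors as a product of jump rates $\beta_{ij} L^j$ along the trajectory multiplied by exponential waiting-time densities; via \eqref{e:VRJP-magic}, it is an integral over $t$ against the conjectured density of the conditional SRW probability. The exponential waiting times integrate to produce the factor $\exp(-\tfrac{\beta}{2}\sum_{i,j}\cosh(t_i-t_j))$ after the substitution $t_i = \tfrac12(\log L^i - \log L^0)$, while the Jacobian of this change of variables yields $\prod_{k\neq 0} e^{-t_k}$. The determinantal factor $({\det}^0(-\Delta_{\beta(t)}))^{1/2}$ arises upon summing SRW transition probabilities over all trajectories consistent with a given transition count matrix: this sum collapses, via a matrix-tree / BEST-type identity, to a principal cofactor of the $t$-dependent Laplacian, and the square root reflects the symmetric way in which the conductance $\beta e^{t_i+t_j}$ shares its weight between its endpoints.

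The main obstacle is the matrix-tree identity in the previous step and the careful bookkeeping of normalisations, in particular the $(2\pi)^{-(N-1)/2}$ factor. A conceptually cleaner alternative---which is in fact the perspective emphasised in this survey---is to proceed by direct comparison with the $\mathbb{H}^{2|2}$ $t$-marginal. The horospherical action \eqref{e:Hhoro} is quadratic in $s,\psi,\bar\psi$, so these variables can be integrated out by exact Gaussian computation, yielding \eqref{e:tildeH} with $n=2, m=1$; since $\tfrac{n-2m-1}{2}=-\tfrac12$, the $t$-marginal picks up exactly the factor $({\det}^0(-\Delta_{\beta(t)}))^{1/2}\prod_{k\neq 0}e^{-t_k}$ in the pinned case ($h=0$, $t_0=0$), and the $\cosh$ term is common to both. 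It then remains to verify that this $t$-marginal is the RWRE mixing measure of the VRJP---this is immediate once the partial exchangeability construction of Step 1 identifies the conductances as $c(t)_{ij}=\beta e^{t_i+t_j}$ and fixes the two-parameter structure of any candidate density.
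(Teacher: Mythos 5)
The survey does not prove this theorem; it states it as a result of Sabot--Tarr\`{e}s~\cite{MR3420510}, and later remarks (in the paragraph on isomorphism theorems) that it can alternatively be derived from Theorem~\ref{thm:BHS-iso-susy}, see~\cite{MR4255180}. Your proposal correctly identifies both of these routes, and the second is the one the survey emphasises, so at the level of strategy you are consistent with the paper.

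There are, however, gaps in the details. First, the Diaconis--Freedman representation theorem is the tool for ERRW, a discrete-time sequence whose law depends only on the transition count matrix; it does not apply off the shelf to the VRJP, whose jump chain depends on the continuous local times and not merely on transition counts. Sabot--Tarr\`{e}s instead compute directly the joint law of the position and local-time field after $n$ jumps, observe that it has the required de Finetti-style product structure, and pass to the limit explicitly --- and that explicit computation is precisely what produces the mixing density, so the abstract representation theorem would buy you nothing even if it applied. Relatedly, the claim that summing over trajectories with fixed transition counts ``collapses, via a matrix-tree / BEST-type identity, to a principal cofactor'' describes the logic of the Coppersmith--Diaconis magic formula for ERRW, not the VRJP derivation; in both the Sabot--Tarr\`{e}s argument and the horospherical picture, the factor $({\det}^0(-\Delta_{\beta(t)}))^{1/2}$ comes out of exactly performable Gaussian integrals (over the fluctuations of the local-time field in one case, over the $s,\psi,\bar\psi$ variables in the other), not from a count of Eulerian trajectories.

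The more serious gap is in your alternative route: the identification of the pinned $\mathbb{H}^{2|2}$ $t$-marginal with the VRJP mixing measure is not ``immediate'' once the conductance form $c(t)_{ij}=\beta e^{t_i+t_j}$ is fixed. Knowing the support of the mixing measure says nothing about its density in $(t_i)_{i\neq 0}$. To identify the two densities one needs a nontrivial input relating VRJP expectations to $\mathbb{H}^{2|2}$ expectations --- this is exactly the content of the isomorphism Theorem~\ref{thm:BHS-iso-susy}, combined with the BFS--Dynkin isomorphism for the quenched random walk in the fixed environment $c(t)$, whose Gaussian side supplies the same determinant. Without that step you have written down the correct formula but have not argued that it is the mixing law.
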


The measure on the right-hand side of \eqref{e:VRJP-magic} is exactly the horospherical $t$-marginal of the $\mathbb{H}^{2|2}$ model
(with $h=0$ and pinned at $0$).
The existence of a phase transition between a transient and a recurrent phase of the VRJP on
$\mathbb{Z}^{d}$ for $d\geq 3$ now essentially follows from the following earlier
{results for the $\mathbb{H}^{2|2}$ model (and extensions to the pinned model):

\begin{theorem}[SSB for $\mathbb{H}^{2|2}$ \cite{MR2728731}]
  Let $d \geq 3$ and  $\beta \geq \beta_1$. There exists
  $C_\beta>0$ such that
  \begin{equation}
    \lim_{h\downarrow 0}\lim_{\Lambda \to \mathbb{Z}^d}\langle \cosh(t_i)^8 \rangle_{\beta,h} \leq C_\beta.
  \end{equation}
  Similar statements hold for other observables and for the pinned model.
\end{theorem}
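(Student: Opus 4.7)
The plan is to work in horospherical coordinates. By~\eqref{e:Hhoro}--\eqref{e:tildeH}, specialised to the $\mathbb{H}^{2|2}$ case ($n=2$, $m=1$, hence $n-2m-1=-1$), the expectation $\langle\cdot\rangle_{\beta,h}$ reduces to an integral against a positive probability measure $\mu_{\beta,h}$ on $t\in\mathbb{R}^\Lambda$ with density proportional to
\[
\exp\Bigl(-\beta\sum_{ij\in E}(\cosh(t_i-t_j)-1) - h\sum_i(\cosh(t_i)-1) - \sum_i t_i\Bigr)\sqrt{\det(-\Delta_{\beta(t)}+h(t))},
\]
so the goal becomes a uniform bound $\int\cosh(t_i)^8\,d\mu_{\beta,h}(t)\leq C_\beta$ as $\Lambda\to\mathbb{Z}^d$ and $h\downarrow 0$. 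Since $\cosh(t_i)^8\asymp e^{8|t_i|}$, it suffices to establish a tail bound $\mu_{\beta,h}(|t_i|>L)\leq Ce^{-c_\beta L}$ with $c_\beta$ large when $\beta$ is large.

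The positive tail is the easy direction: $t_i\gg 0$ forces neighbouring $t_j$ to be lifted too by the penalty $\beta\cosh(t_i-t_j)$, and then the factor $e^{-\sum_k t_k}$ directly suppresses the lifted region. The negative tail $t_i\ll 0$ is more delicate, because $\det(-\Delta_{\beta(t)}+h(t))$ becomes small there and competes with the gain from $e^{-\sum_k t_k}$. To handle it, I would use the matrix-tree representation
\[
\det(-\Delta_{\beta(t)}+h(t)) = \sum_{F}\prod_{ij\in E(F)}\beta e^{t_i+t_j}\prod_{k\in R(F)}h e^{t_k},
\]
with $F$ ranging over rooted spanning forests, and apply Cauchy--Schwarz to extract a workable bound on $\sqrt{\det}$. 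After matching edge contributions against the quadratic part of $\cosh(t_i-t_j)$ and the $e^{-\sum_k t_k}$ factor, the leading behaviour of $\mu_{\beta,h}$ for $\beta$ large is that of a Gaussian field on $t$ with quadratic form $\beta(-\Delta)+h$, up to controlled non-Gaussian corrections.

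With the Gaussian comparison in hand, the hypothesis $d\geq 3$ enters through summability of the lattice Green's function, which bounds the variance of $t_i$ by $O(1/\beta)$ uniformly in $\Lambda$. Chebyshev's inequality then delivers the exponential tail bound on $|t_i|$, and integrating yields the uniform bound on $\langle\cosh(t_i)^8\rangle_{\beta,h}$. The main obstacle is making the Gaussian comparison rigorous in the "bad" regions where $t$ fluctuates wildly: there $-\Delta_{\beta(t)}$ degenerates and the matrix-tree bound on $\sqrt{\det}$ becomes insufficient on its own. The deep content of~\cite{MR2728731} is precisely a multi-scale / induction-on-scales argument showing that such bad regions are rare under $\mu_{\beta,h}$, and propagating this rarity uniformly as $\Lambda\to \mathbb{Z}^d$ and $h\downarrow 0$ is the real technical heart of the theorem.
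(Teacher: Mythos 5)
The paper does not prove this theorem; it is quoted from Disertori--Spencer--Zirnbauer \cite{MR2728731}. So the comparison should be against the DSZ proof, not against anything in this survey.

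Your setup is correct: in horospherical coordinates the $\mathbb{H}^{2|2}$ expectation ($n=2$, $m=1$, so $n-2m-1=-1$) is an integral against the positive $t$-marginal with density
$e^{-\beta\sum(\cosh(t_i-t_j)-1)-h\sum(\cosh t_i-1)}\,e^{-\sum t_i}\sqrt{\det(-\Delta_{\beta(t)}+h(t))}$,
and the matrix--tree identity is the right tool to think about the determinant. You also correctly identify that $d\geq 3$ enters through summability of the Green's function and that the hard part is the multi-scale argument. But there are two substantive gaps.

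First, the missing mechanism is supersymmetry, and it is not optional. The key input in \cite{MR2728731} is a family of SUSY Ward identities (the $\langle z_i\rangle=1$ type identities and their refinements, cf.\ Theorem~\ref{thm:susyloc} and \eqref{e:ward1}--\eqref{e:ward2} in this paper) which give \emph{exact} control of quantities like $\langle\cosh^m(t_i-t_j)\rangle_{\beta,h}$ at nearest-neighbour scale, uniformly in $\Lambda$ and $h$. Your sketch replaces this by a ``Gaussian comparison with quadratic form $\beta(-\Delta)+h$ plus controlled non-Gaussian corrections,'' but there is no known way to control those corrections perturbatively; the whole point is that the SUSY identities give a priori bounds that the iteration on scales can then bootstrap to long distances. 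Without the Ward identities the induction does not close, and naming the multi-scale argument as ``the real technical heart'' while omitting the identities that feed it leaves out the essential idea.

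Second, a smaller but genuine error: Chebyshev's inequality applied to a second-moment (variance) bound only gives a polynomial tail $\mu(|t_i|>L)=O(L^{-2})$, not the exponential tail $e^{-cL}$ you claim to derive from it. To get exponential tails you need control of exponential moments of $t_i$ (equivalently of $\langle\cosh^m(t_i)\rangle$ for all $m$), which is exactly what the Ward-identity machinery of DSZ is designed to produce. As written, the step from ``variance $O(1/\beta)$'' to ``exponential tail'' is a non sequitur.
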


\begin{theorem}[Localisation for $\mathbb{H}^{2|2}$ \cite{MR2736958}]
  \label{thm:h22loc}
  Let $d \geq 1$ and $\beta \leq \beta_0$. There exist
  $C_\beta,c_\beta>0$ such that
  \begin{equation}
     \langle x_ix_j \rangle_{\beta,h} \leq \frac{C_\beta}{h} e^{-c_\beta |i-j|}.
  \end{equation}
  Similar statements hold for other observables and for the pinned model.
\end{theorem}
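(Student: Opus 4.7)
The plan is to reduce the correlation to a covariance in the $t$-field and then exploit weak coupling at small $\beta$. In horospherical coordinates one has the algebraic identity $x_i + z_i = \sinh(t_i)+\cosh(t_i) = e^{t_i}$ as an element of the Grassmann algebra. Combined with the supersymmetric identities $\langle z_i\rangle = \langle z_i z_j\rangle = 1$ from Theorem~\ref{thm:susyloc} and the global $x\to-x$ symmetry of the $\mathbb{H}^{2|2}$ measure (which gives $\langle x_i\rangle = \langle x_i z_j\rangle = 0$), this yields $\langle e^{t_i}\rangle_t = 1$ together with the clean identity
\begin{equation*}
  \langle x_i x_j\rangle_{\beta,h} \;=\; \langle e^{t_i+t_j}\rangle_t - 1 \;=\; \mathrm{Cov}_t\bigl(e^{t_i},e^{t_j}\bigr),
\end{equation*}
where $\langle\cdot\rangle_t$ denotes the positive $t$-marginal with density $e^{-\tilde H_{\beta,h}}$ from~\eqref{e:tildeH}. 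The task becomes to bound this covariance by $C_\beta h^{-1}e^{-c_\beta|i-j|}$ uniformly in $\Lambda$.

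At $\beta=0$ one has $\det(-\Delta_{\beta(t)}+h(t))|_{\beta=0}=\prod_i he^{t_i}$, so the log-determinant collapses to $N\log h + \sum_i t_i$ and $\tilde H_{0,h}$ reduces (up to additive constants) to $h\sum_i\cosh(t_i) + \tfrac12\sum_i t_i$. The $t$-measure factorises, with single-site density proportional to $e^{-h\cosh(t_i)-t_i/2}$. A Laplace-type analysis of this density, using the substitution $t_i\mapsto -t_i$ in the ratio defining $\langle e^{t_i}\rangle_0$, confirms $\langle e^{t_i}\rangle_0 = 1$ and shows $\langle e^{2t_i}\rangle_0 \sim 1/h$, the dominant contribution coming from the saddle $t_i\sim\log(1/h)$. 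Hence $\mathrm{Var}_0(e^{t_i})\sim 1/h$ while $\mathrm{Cov}_0(e^{t_i},e^{t_j})=0$ for $i\neq j$. For small $\beta>0$ I would Mayer-expand $e^{-\beta\sum_{ij}(\cosh(t_i-t_j)-1)}$ together with the $\beta$-dependent part of the log-determinant around their $\beta=0$ values, and identify the truncated two-point function as a sum over connected polymers spanning both $i$ and $j$; each connected polymer of length $k$ contributes a factor $(C\beta)^k$ times moments of $e^{\pm t_i}$ over its support. Standard cluster-expansion combinatorics then deliver exponential decay at rate $c_\beta\asymp\log(1/\beta)$ with overall amplitude $O(1/h)$ inherited from the single-site variance.

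The main obstacle is the log-determinant $-\tfrac12\log\det(-\Delta_{\beta(t)}+h(t))$ in $\tilde H_{\beta,h}$, which is non-local in $t$ and couples all vertices. To fit it into the polymer expansion I would write
\begin{equation*}
  \log\det(-\Delta_{\beta(t)}+h(t)) \;=\; \log\det h(t) + \mathrm{tr}\log\bigl(1 + h(t)^{-1}(-\Delta_{\beta(t)})\bigr),
\end{equation*}
expand the trace-log as $\sum_{k\geq 1}\tfrac{(-1)^{k+1}}{k}\mathrm{tr}\bigl(h(t)^{-1}(-\Delta_{\beta(t)})\bigr)^k$, and recognise the $k$-th term as a sum over closed walks of length $k$ on $\Lambda$, each of weight $(\beta/h)^k$ times a product of $e^{t_j}$ factors over the vertices visited. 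Absolute convergence of this series, uniformly in $\Lambda$ and in $h$ on compacta, requires a priori exponential moment bounds $\langle e^{pt_i}\rangle_t\leq C_p$ for any fixed $p$. Establishing these bounds is the technical heart of the Disertori--Spencer argument: they come from Ward-identity integration by parts together with the convexity of the quadratic form $f\mapsto \sum_{ij}\beta e^{t_i+t_j}(f_i-f_j)^2 + \sum_i he^{t_i}f_i^2$ as a function of $t$. Once the moment bounds are available and the polymer expansion converges, the covariance formula for $\langle x_i x_j\rangle$ yields the claimed localisation estimate.
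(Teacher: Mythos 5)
The algebraic reduction $\langle x_ix_j\rangle_{\beta,h} = \langle e^{t_i+t_j}\rangle_t - 1 = \mathrm{Cov}_t(e^{t_i},e^{t_j})$ is correct and tidy, and your $\beta=0$ computation is right: the single-site density factorises to $e^{-h\cosh(t)-t/2}$, giving $\langle e^{t}\rangle_0=1$ and $\langle e^{2t}\rangle_0 \sim 1/h$. The problem is with the proposed cluster expansion. The trace-log series for $\log\det(1+h(t)^{-1}(-\Delta_{\beta(t)}))$ is an expansion whose $k$-th term is weighted by $(\beta/h)^k$, as you yourself note, and these weights \emph{blow up} in the regime the theorem actually concerns: the bound must hold with $h$-uniform constants $C_\beta, c_\beta$ as $h\downarrow 0$, which is exactly the limit in which $\beta/h\to\infty$. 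Restricting to ``$h$ on compacta'' away from zero does not prove the theorem---in that regime exponential decay is essentially automatic. There is also no way to salvage this by feeding in moment bounds: the uniform moment bounds $\langle e^{pt_i}\rangle_t \leq C_p$ you would like are \emph{false} in the small-$\beta$ regime, since already at $\beta=0$ one has $\langle e^{2t_i}\rangle_0\sim 1/h$ (and $\langle e^{-t_i}\rangle_0\sim 1/h$), so the moments necessarily carry inverse powers of $h$ that will compound along any polymer of length $\geq 2$ and ruin the expansion.

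The step your proposal is missing is the one the paper explicitly flags as the key to the Disertori--Spencer proof: conjugating by $e^{-t}$ so that $-\Delta_{\beta(t)}+h(t)$ becomes the random Schr\"odinger operator $-\Delta_\beta+V(t)$ of \eqref{e:V}. After this conjugation, the two-point function takes the form $\langle x_ix_j\rangle = \langle((-\Delta_\beta+V(t))^{-1})(i,j)\rangle_t$, and the small parameter is $\beta$ itself (the kinetic hopping), not $\beta/h$. Exponential decay of the Green's function of $-\Delta_\beta+V$ then comes from the strong-disorder regime of this random operator, combined with careful a priori control on the $t$-marginal (these are where Ward identities and convexity enter, but of a different form than you describe). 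Without this conjugation, the $h$-dependence cannot be tamed, and the expansion in the original variables does not converge where it needs to.
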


The existence of a recurrent phase for small $\beta$ has also been proved
more directly from the definition of the VRJP \cite{MR3189433}. 
A proof of transience
that only uses the random walk point of view seems challenging, and
would be of interest.

\subsection{Hyperbolic symmetry and the VRJP}
\label{sec:hyperb-symm-vrjp}

A more direct and general connection between hyperbolic spin systems
and the VRJP was found later~\cite{MR4021254}. Towards this, observe
(as was already done in~\cite{MR3420510}) that the joint process
$(X_{t},L_{t})$ of the VRJP and its local time is a Markov process,
where $L_{t}=(L^{j}_{t})_{j\in V}$. The infinitesimal generator
$\mathcal{L}$ of the joint process acts on
$g\colon V\times [0,\infty)^{V}\to \mathbb{R}$ by
\begin{equation}
  \label{eq:VRJP-gen}
  \mathcal{L} g(i,\ell) = \sum_{j}\beta_{ij}\ell_{j}(g(j,\ell)-g(i,\ell)) +
  \frac{\partial g(i,\ell)}{\partial \ell_{i}}.
\end{equation}
Write $\mathbb{E}^{\text{VRJP}(\beta,\ell)}_{i}$ for the expectation of the
joint process with initial vertex $i$ and local times
$\ell = (\ell_{i})_{i\in\Lambda}$. The definition~\eqref{eq:VRJP}
corresponds to $\ell_{i}=1$ for all $i$.

To connect the VRJP to hyperbolic symmetry, consider the
$\mathbb{H}^{2}$ model, for example, and recall the infinitesimal
generator $T_i$ of Lorentz boosts in the $x_{i}z_{i}$-plane acting at vertex
$i$ from \eqref{eq:Lorentz}.  Then under mild
hypotheses on $G$, integration by parts and~\eqref{eq:Lorentz} yields
\begin{equation}
  \label{eq:Iso-1}
  -\sum_{j}\int_{(\mathbb{H}^{2})^{\Lambda}} (\mathcal{L}G(j,z))
  x_{i}x_{j} e^{-H_{\beta,0}(u)} \prod_{k\in\Lambda}du_{k}
  = \int_{(\mathbb{H}^{2})^{\Lambda}} 
  (T_{i}x_{i}) G(j,z)  e^{-H_{\beta,0}(u)}\prod_{k\in\Lambda}du_{k}.
\end{equation}
Thus boosts are adjoint to the generator of the VJRP.
A consequence is the next theorem. 
\begin{theorem}
  \label{thm:BHS-iso}
  Consider the $\mathbb{H}^{2}$ model. If
  $F\colon \mathbb{R}^{\Lambda}\to\mathbb{R}$ decays fast enough, then
  \begin{equation}
    \label{eq:BHS-iso}
    \langle x_{i}x_{j}F(z)\rangle_{\beta,0} = \langle
    z_{i}\int_{0}^{\infty}dt\, \mathbb{E}^{\text{VRJP}(\beta,z)}_{i}[F(
    L_{t})1_{X_{t}=j}]\rangle_{\beta}. 
  \end{equation}
\end{theorem}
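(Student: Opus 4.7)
The plan is to apply the adjoint-type relation \eqref{eq:Iso-1} to a function $\Psi$ built from the VRJP and chosen so that $-\mathcal{L}\Psi$ localises onto the vertex $j$. Define
\[
\Psi(i', \ell) := \int_{0}^{\infty} \mathbb{E}^{\text{VRJP}(\beta,\ell)}_{i'}\!\left[F(L_{t})\,1_{X_{t}=j}\right] dt,
\]
so that the right-hand side of \eqref{eq:BHS-iso} is precisely $\langle z_{i}\, \Psi(i,z)\rangle_{\beta}$, with the spin variable $z=(z_{i'})$ playing the role of the initial local-time profile $\ell$. Since $(X_{t},L_{t})$ is Markov with generator $\mathcal{L}$ given in \eqref{eq:VRJP-gen}, the function $\Phi(t,i',\ell)=\mathbb{E}^{\text{VRJP}(\beta,\ell)}_{i'}[F(L_{t})1_{X_{t}=j}]$ satisfies $\partial_{t}\Phi=\mathcal{L}\Phi$ with initial datum $\Phi(0,i',\ell)=F(\ell)\delta_{i'j}$. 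Integrating in $t$ and exploiting that $\Phi(t,\cdot,\cdot)\to 0$ as $t\to\infty$ (a point that must be extracted from the growth of the local times and the decay hypothesis on $F$) yields the Poisson-type identity
\[
-\mathcal{L}\Psi(i',\ell) = F(\ell)\,\delta_{i'j}.
\]

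Next I apply \eqref{eq:Iso-1} with $G=\Psi$. Using $T_{i}x_{i}=z_{i}$, the right-hand side of \eqref{eq:Iso-1} is exactly $\langle z_{i}\,\Psi(i,z)\rangle_{\beta}$. The left-hand side, by the Poisson identity just established, collapses to
\[
-\sum_{i'} \langle x_{i}x_{i'}\,(\mathcal{L}\Psi)(i',z)\rangle_{\beta,0}
= \sum_{i'} \langle x_{i}x_{i'}\,F(z)\,\delta_{i'j}\rangle_{\beta,0}
= \langle x_{i}x_{j}\,F(z)\rangle_{\beta,0}.
\]
Equating the two sides gives \eqref{eq:BHS-iso}.

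The main obstacle is the rigorous justification of \eqref{eq:Iso-1} and of the manipulations above in a regime where the hyperbolic target space is non-compact. The relation \eqref{eq:Iso-1} itself comes from integration by parts using the boost invariance of Haar measure on $\mathbb{H}^{2}$ and of the action $H_{\beta,0}$, applied to the product $x_{i}\,G$; one uses $T_{i'}(x_{i})=\delta_{ii'}z_{i}$ at each vertex $i'$, and the $T_{i'}$-derivative hitting $G$ reconstructs the hopping and drift terms of $\mathcal{L}$ after reorganising contributions from neighbouring pairs, with the $\beta e^{t_{i'}+t_{k}}$-type weights appearing naturally from differentiating the quadratic part of $H_{\beta,0}$. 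The nontrivial check is that no boundary contributions arise at infinity in the spin variables; this is where the decay hypothesis on $F$ and the use of the pinned (or small positive field) expectation are essential. In parallel, one must verify that $\Psi$ is well-defined and smooth enough in $\ell$ to apply the generator identity, i.e.~that the time integral converges and that one can differentiate under the expectation. These analytic ingredients are the technical heart of the proof; once they are in place, the remaining argument is purely algebraic.
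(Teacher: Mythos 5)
Your proposal is correct and follows essentially the same route as the paper's sketch: apply the adjoint identity \eqref{eq:Iso-1} with a $G$ built from the VRJP Markov semigroup, use the backward Kolmogorov equation $\mathcal{L}G_t=\partial_t G_t$, and integrate over $t\in(0,\infty)$. Your formulation is slightly more careful than the survey's sketch in that you insert the endpoint indicator $1_{X_t=j}$ into the observable and phrase the time integration as solving a Poisson equation $-\mathcal{L}\Psi(i',\ell)=F(\ell)\delta_{i'j}$, which cleanly isolates the fixed vertex $j$ after the $\sum_{i'}$ in \eqref{eq:Iso-1}; the sketch in the paper suppresses this point (its $G_t(j,\ell)=\mathbb{E}^{\text{VRJP}(\beta,\ell)}_j F(L_t)$ as written yields the identity summed over the endpoint, with the notational collision between the placeholder and the fixed $j$ left implicit), but the underlying mechanism is identical. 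The remaining analytic caveats you flag (vanishing of $\Phi$ as $t\to\infty$, absence of boundary terms in the integration by parts, differentiability under the expectation) are exactly the points deferred in the paper's sketch.
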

\begin{proof}[Sketch of proof]
  Normalise~\eqref{eq:Iso-1} and choose $G(j,\ell) = G_{t}(j,\ell) =
  \mathbb{E}^{\text{VRJP}(\beta,\ell)}_{j}F(L_{t})$. Since
  $(X_{t},L_{t})$ is a Markov process with generator $\mathcal{L}$,
  we have $\mathcal{L} G_{t}=\partial_{t}G_{t}$.
  Integrating the resulting identity over $t$ in $(0,\infty)$
  gives the result.
\end{proof}

Theorem~\ref{thm:BHS-iso} shows that $\mathbb{H}^{2}$ quantities can
be computed in terms of the averages of VRJP quantities, the average
being over the initial local time of the VRJP. This average is
inconvenient for studying the VRJP itself. The computations above,
however, immediately generalise to other hyperbolic spin models. For
the $\mathbb{H}^{2|2}$ model one can in addition use Theorem~\ref{thm:susyloc} to
exactly compute the undesirable average. The result is the following theorem.

\begin{theorem}
  \label{thm:BHS-iso-susy}
  Consider the $\mathbb{H}^{2|2}$ model. Then for any $F\colon \mathbb{R}^\Lambda \to \mathbb{R}$ that decays fast enough,
  \begin{equation}
    \label{eq:BHS-iso}
    \langle x_{i}x_{j}F(z)\rangle_{\beta,0} = \int_{0}^{\infty}dt\,
    \mathbb{E}^{\text{VRJP}(\beta)}_{i} [ F(L_{t})1_{X_{t}=j}].
  \end{equation}
\end{theorem}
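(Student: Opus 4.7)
The strategy is to mimic the proof of Theorem~\ref{thm:BHS-iso} inside the $\mathbb{H}^{2|2}$ model and then to invoke supersymmetric localisation (Theorem~\ref{thm:susyloc}) to eliminate the residual spin-field expectation that made the $\mathbb{H}^{2}$ formula cumbersome.

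The first step is to extend the identity~\eqref{eq:Iso-1} to $\mathbb{H}^{2|2}$. The Lorentz boost $T_i = z_i \partial_{x_i}$, extended in the natural way to act on the Grassmann variables, is a symmetry of both the Berezin reference measure and the action $H_{\beta,0}$, and $T_i x_i = z_i$. Integration by parts against $T_i$ therefore yields~\eqref{eq:Iso-1} verbatim, with the $\mathbb{H}^{2|2}$ expectation in place of the $\mathbb{H}^{2}$ one. Choosing $G_t(k,\ell) = \mathbb{E}_k^{\text{VRJP}(\beta,\ell)}[F(L_t)\mathbf{1}_{X_t = j}]$, using $\mathcal{L} G_t = \partial_t G_t$ together with the initial condition $G_0(k,\ell)=F(\ell)\mathbf{1}_{k=j}$, and integrating in $t\in(0,\infty)$ produces the $\mathbb{H}^{2|2}$ analogue of Theorem~\ref{thm:BHS-iso}:
\begin{equation*}
\langle x_i x_j F(z) \rangle_{\beta,0}
= \langle z_i\, H(z) \rangle_{\beta,0},\qquad
H(z) := \int_0^\infty dt\, \mathbb{E}_i^{\text{VRJP}(\beta,z)}\bigl[F(L_t)\mathbf{1}_{X_t = j}\bigr],
\end{equation*}
where the VRJP initial local time is the full $\mathbb{H}^{2|2}$ field $z=(z_k)$, defined via Taylor expansion in the Grassmann components.

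The second step is supersymmetric localisation. Since $z_i H(z)$ is a smooth function of the vector $(z_k)_{k\in\Lambda}$ with adequate decay inherited from $F$, Theorem~\ref{thm:susyloc} (applied in its appropriate pinned variant so as to accommodate $h=0$) evaluates the right-hand side at $z=(1,\dots,1)$, giving $[z_i H(z)]_{z=1} = H(1)$. Because the VRJP with uniform unit initial local times is exactly the standard VRJP of~\eqref{eq:VRJP}, this equals $\int_0^\infty dt\, \mathbb{E}_i^{\text{VRJP}(\beta)}[F(L_t)\mathbf{1}_{X_t=j}]$, which is the claimed identity.

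The main technical point is verifying that $H(z)$ satisfies the decay and smoothness hypotheses required for Theorem~\ref{thm:susyloc}. Smoothness is automatic since $\mathbb{E}_i^{\text{VRJP}(\beta,\ell)}$ depends smoothly on $\ell>0$; the decay demands uniform tail estimates on the VRJP as a function of the initial local times, to be combined with the decay assumed of $F$. A secondary issue is making rigorous sense of $\mathbb{E}^{\text{VRJP}(\beta,z)}$ when $z$ has nilpotent Grassmann components, which is accomplished by Taylor expanding around the bosonic part of $z$; the expansion terminates because each $z_k$ contains only finitely many Grassmann products.
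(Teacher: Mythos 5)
Your proposal reproduces exactly the paper's (sketched) route: first carry the integration-by-parts argument of Theorem~\ref{thm:BHS-iso} over to $\mathbb{H}^{2|2}$ to obtain $\langle x_i x_j F(z)\rangle_{\beta,0}=\langle z_i H(z)\rangle_{\beta,0}$, and then apply the supersymmetric localisation of Theorem~\ref{thm:susyloc} to collapse the spin average to evaluation at $z\equiv 1$. This is precisely what the text describes immediately before the theorem statement, so the approach, including the attention you draw to the smoothness/decay hypotheses and to the $h=0$ versus $h>0$ (or pinned) technicality, is faithful to the paper's intended argument.
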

In particular, $\langle x_{i}^{2} \rangle_{\beta,h}$ is the expected time the
VRJP started from $i$ spends at $i$ when killed at rate $h>0$.
This relation can be used to
prove the VRJP is recurrent in two dimensions, irrespective of the
reinforcement strength $\beta>0$, by proving a Mermin--Wagner theorem
for the $\mathbb{H}^{2|2}$ model~\cite{MR4021254}. Informally,
Mermin--Wagner theorems assert that continuous symmetries cannot be
spontaneously broken in $d=1,2$. As discussed earlier, for the
$\mathbb{H}^{2|2}$ model SSB corresponds to a finite variance, i.e.,
transience.

\paragraph{Isomorphism theorems} Theorems~\ref{thm:BHS-iso}
and~\ref{thm:BHS-iso-susy} are examples of \emph{isomorphism
  theorems}, meaning identities relating the local time field of a
stochastic process to a spin system. The first example of such a
result related simple random walk to the Gaussian free field and was
obtained by Brydges, Fr\"{o}hlich, and Spencer~\cite{MR648362}. They
were inspired by Symanzik~\cite{Syma69}. The formulation as a
distributional identity is due to Dynkin~\cite{MR693227}; sometimes
the result is called the BFS-Dynkin isomorphism. A host of other
isomorphism theorems have been found in Gaussian settings,
see~\cite{MR2250510}. Other isomorphism theorems for the VRJP can be
obtained by the approach above, and it is possible to obtain
Theorem~\ref{thm:VRJP-magic} in this
way. See~\cite{MR4255180}. Isomorphisms for the VRJP can also be
obtained by expressing the VRJP as a mixture of Markov processes and
using isomorphism theorems for the Markov processes,
see~\cite{1911.09036}.

\subsection{Random Schr\"odinger representation and STZ field}
\label{sec:STZ}

In \cite{MR2736958}, it was observed that after conjugation by the diagonal matrix $e^{-t} = (e^{-t_i})_i$,
the matrix $-\Delta_{\beta(t)}+h(t)$ in \eqref{e:Deltabetat} becomes a Schr\"odinger operator with $t$-dependent potential:
\begin{equation}
  \label{e:V}
  e^{-t} \circ (
  -\Delta_{\beta(t)} + h(t)) \circ e^{-t}
  = -\Delta_\beta + V(t), \qquad V_i(t) = \sum_{j} \beta_{ij}(e^{t_j-t_i}-1) + h_i e^{-t_i}.
\end{equation}
This point of view led to the proof of Theorem~\ref{thm:h22loc}.  It
was later recognised that this random Schr\"odinger point of view can
be used to obtain a powerful representation of the
$t$-field~\cite{MR3729620}.  For the pinned $\mathbb{H}^{2|2}$ model
with $h=0$ and $t_0=0$, the $t$-field measure \eqref{e:tildeH} can be
written in terms of $-\Delta_\beta+V(t)$ using that
\begin{equation}
  e^{-\tilde H_\beta(t)}
  =
  e^{-\frac{1}{2}\sum_i V_i(t)}
  (\det(-\Delta_\beta +V(t)))^{1/2}
  .
\end{equation}
This suggests it might be useful to change variables from $t$ to
$V(t)$. Implementing this change of variables requires
taking into account that when $t_0=0$, the map $t\mapsto V(t)$ is not surjective onto the
set of $V$ such that $(-\Delta_\beta + V)$ is positive
definite. This can be sidestepped by treating $V$ as the fundamental
variable, i.e., considering
\begin{equation}
  \label{e:V-field}
  e^{-\frac12 \sum_i V_i}
  (\det(-\Delta_\beta+V))^{-1/2}
  \, \mathbf{1}(-\Delta_\beta + V \text{ is positive definite}) \, dV.
\end{equation}
The random vector $B_i = \frac12(V_i+\sum_j \beta_{ij})$ is
often called the `$\beta$-field,'
but since we use $\beta$ for edge weights (inverse temperature), we will
denote it by $B$ instead 
and call it the \emph{STZ field}.

\begin{theorem}
  \label{thm:stz}
  The Laplace transform of the STZ field is given by
  \begin{equation}
    \mathbb{E} e^{-(\lambda,B)} =
    \prod_i \frac{1}{(\lambda_i+1)^{1/2}}
    \prod_{ij} e^{-\beta_{ij}(\sqrt{\lambda_i+1}\sqrt{\lambda_j+1}-1)}
    .
  \end{equation}
 Moreover, the $t$-field (pinned at any vertex) can be recovered in distribution from $B$.
\end{theorem}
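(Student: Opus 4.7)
The plan is to compute the Laplace transform by a change of variables on the density \eqref{e:V-field} that absorbs the parameters $\lambda_i$ into a rescaling of the edge weights, reducing the identity to the statement that the total mass of \eqref{e:V-field} depends on the graph but not on $\beta$. Starting from
\[
\mathbb{E}[e^{-(\lambda,B)}] \;=\; \frac{1}{Z_\beta}\, e^{-\frac12\sum_i\lambda_i\sum_j\beta_{ij}}\int e^{-\frac12\sum_i(1+\lambda_i)V_i}\bigl(\det(-\Delta_\beta+V)\bigr)^{-1/2}\mathbf{1}(-\Delta_\beta+V>0)\,dV,
\]
where $Z_\beta$ is the normalization of \eqref{e:V-field} and I have used $(\lambda,B)=\tfrac12\sum_i\lambda_iV_i+\tfrac12\sum_i\lambda_i\sum_j\beta_{ij}$, the first move is to introduce $D=\operatorname{diag}(\sqrt{1+\lambda_i})$ and the modified weights $\tilde\beta_{ij}=\beta_{ij}\sqrt{(1+\lambda_i)(1+\lambda_j)}$.

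The key observation is the matrix identity $D(-\Delta_\beta+V)D=-\Delta_{\tilde\beta}+\operatorname{diag}(\tilde V)$, where $\tilde V_i=(1+\lambda_i)V_i+\delta_i$ with $\delta_i=\sum_j\beta_{ij}\sqrt{1+\lambda_i}\bigl(\sqrt{1+\lambda_i}-\sqrt{1+\lambda_j}\bigr)$; positive definiteness is preserved because $D$ is positive. After tracking the Jacobian $dV=\prod_i(1+\lambda_i)^{-1}d\tilde V$ and $\det(-\Delta_\beta+V)=\prod_i(1+\lambda_i)^{-1}\det(-\Delta_{\tilde\beta}+\operatorname{diag}(\tilde V))$, and using the elementary identity
\[
\tfrac12\sum_i\delta_i-\tfrac12\sum_i\lambda_i\sum_j\beta_{ij} \;=\; -\sum_{ij}\beta_{ij}\bigl(\sqrt{(1+\lambda_i)(1+\lambda_j)}-1\bigr),
\]
the substitution converts the integral on the right-hand side into $Z_{\tilde\beta}$, and I would obtain
\[
\mathbb{E}[e^{-(\lambda,B)}] \;=\; \prod_i(1+\lambda_i)^{-1/2}\exp\Bigl(-\sum_{ij}\beta_{ij}\bigl(\sqrt{(1+\lambda_i)(1+\lambda_j)}-1\bigr)\Bigr)\cdot\frac{Z_{\tilde\beta}}{Z_\beta}.
\]

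The hard part will be the identity $Z_{\tilde\beta}=Z_\beta$, and in its cleanest form that $Z_\beta$ depends only on the graph, equal (I expect) to $(2\pi)^{|\Lambda|/2}$. The two-vertex case already illustrates the mechanism: setting $(s,d)=(V_0+V_1,V_0-V_1)$ makes the positive-definite region $\{s+2b>0,\ (s+2b)^2>d^2+4b^2\}$, so the inner $d$-integral gives $\pi$ independently of $b$, and the remaining $s$-integral cancels the $e^b$ prefactor to yield $Z_b=2\pi$. For the general case I would lift the integral to a supersymmetric one by representing $(\det(-\Delta_\beta+V))^{-1/2}$ as a bosonic Gaussian integral in an auxiliary field; the resulting joint measure extends the horospherical representation of the pinned $\mathbb{H}^{2|2}$ model to the entire positive-definite cone, and Theorem~\ref{thm:susyloc} should then force $Z_\beta$ to be the claimed universal constant. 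The recovery of the $t$-field pinned at any vertex should follow from the same supersymmetric extension: conditional on $V$ the auxiliary Gaussian has covariance $(-\Delta_\beta+V)^{-1}$, and the horospherical formulas of Section~\ref{sec:hyp} then reconstruct $t$ (with pinning of one's choice) from $V$ together with these auxiliary variables.
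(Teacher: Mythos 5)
Your conjugation argument is a clean and correct reduction: writing $(\lambda,B)=\tfrac12\sum_i\lambda_iV_i+\tfrac12\sum_i\lambda_i\sum_j\beta_{ij}$, conjugating $-\Delta_\beta+V$ by $D=\operatorname{diag}(\sqrt{1+\lambda_i})$, and tracking the Jacobian and determinant factors does produce
\[
\mathbb{E}[e^{-(\lambda,B)}]=\prod_i(1+\lambda_i)^{-1/2}\exp\Bigl(-\sum_{ij}\beta_{ij}\bigl(\sqrt{(1+\lambda_i)(1+\lambda_j)}-1\bigr)\Bigr)\cdot\frac{Z_{\tilde\beta}}{Z_\beta},
\]
with $\tilde\beta_{ij}=\beta_{ij}\sqrt{(1+\lambda_i)(1+\lambda_j)}$, and all the intermediate algebra (including the exponent identity) checks out. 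The theorem is thus equivalent to $Z_{\tilde\beta}=Z_\beta$, and you have correctly identified that as the crux; proving $Z_\beta=(2\pi)^{|\Lambda|/2}$ for all weights is indeed enough, and your two-vertex computation reaches the right value $2\pi$ (with a couple of minor slips: the inner $d$-integral is $2\pi$, not $\pi$, and there is no $e^b$ prefactor to cancel).

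The genuine gap is in the proposed argument for $Z_\beta$ being constant. You suggest lifting $(\det(-\Delta_\beta+V))^{-1/2}$ by a bosonic Gaussian so as to ``extend the horospherical representation of the pinned $\mathbb{H}^{2|2}$ model to the entire positive-definite cone'' and then invoking Theorem~\ref{thm:susyloc}. But the image of the pinned horospherical map $t\mapsto V(t)$ (with $h=0$, $t_0=0$) is an $(|\Lambda|-1)$-dimensional set on which $-\Delta_\beta+V(t)$ annihilates the vector $e^t$, hence $\det(-\Delta_\beta+V(t))=0$: this image sits on the \emph{boundary} of the cone, while the measure \eqref{e:V-field} is supported on the interior, where the $\mathbb{H}^{2|2}$ model assigns no mass at all. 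So there is no ``extension'' in the sense you describe, and Theorem~\ref{thm:susyloc} does not apply to the $V$-measure directly; this is precisely the non-surjectivity issue flagged in Section~\ref{sec:STZ}. The passage from the $t$-field to the $V$-field is a genuine enlargement (the $V$-field has one extra degree of freedom), and the relation is that the $t$-field (pinned at any vertex $a$) is \emph{recovered} from $V$ via $e^{-t_i}=G_{ia}/G_{aa}$ with $G=(-\Delta_\beta+V)^{-1}$, as a non-invertible pushforward — not as a change of variables or a marginal of a single SUSY measure over the whole cone.

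In the source for this theorem \cite{MR3729620}, the identity is obtained by evaluating the Laplace-transform integral directly, essentially by integrating out one vertex at a time via Schur complements of $-\Delta_\beta+V$; this simultaneously yields the normalisation and the product form, without passing through the $Z_{\tilde\beta}/Z_\beta$ reduction. Your reduction is an attractive alternative first step, but to complete it you still need a genuine argument (inductive, or a correctly formulated supersymmetric one on a space that actually matches the cone) for the constancy of $Z_\beta$, and the sketch you give does not yet supply one. Finally, the second assertion of the theorem — recovery of the $t$-field pinned at any vertex — requires exhibiting the explicit map $V\mapsto t$ above and verifying its pushforward is the $t$-marginal; the claim that ``the horospherical formulas reconstruct $t$ from $V$ together with the auxiliary variables'' does not address the fact that the recovery must work for \emph{every} pinning vertex from the \emph{same} $V$, which is a nontrivial consistency that your Gaussian-auxiliary picture does not explain.
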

In particular, the theorem implies the STZ field is $1$-dependent for $\mathbb{H}^{2|2}$.
In \cite{MR3904155}, this remarkable property of the STZ field was
used to construct an infinite volume version on $\mathbb{Z}^d$, and
applied to characterise transience and recurrence of the VRJP in terms
of a $0/1$ law.

\subsection{Phase diagram of the VRJP}
\label{sec:phase-diagram-vrjp}

The most basic qualitative question one can ask about the VRJP is
whether it is recurrent or transient for a given reinforcement
strength $\beta>0$. This may in principle depend on the
precise notion of recurrence used, as the VRJP is non-Markovian. As
discussed above, for $d\geq 3$ the existence of a phase in which the
VRJP is almost surely recurrent was established
in~\cite{MR3420510,MR3189433}, and an almost surely transient phase
in~\cite{MR3420510}. For $d=2$, recurrence for all $\beta>0$ in the
sense of infinite expected local time at the initial vertex was
established in~\cite{MR4021254}. 
Proofs of almost sure recurrence followed
shortly~\cite{1911.08579,MR4218029}.

The qualitative behaviour of the VRJP is almost completely understood
on $\mathbb{Z}^{d}$ due to the following remarkable correlation
inequality of Poudevigne.

\begin{theorem} \label{thm:poudevigne} For the $\mathbb{H}^{2|2}$
  model and any convex function $f$, the expectation
  $\langle f(e^{t_j})\rangle_\beta^0$ is increasing in all weights
  $\beta=(\beta_{ij})$.
\end{theorem}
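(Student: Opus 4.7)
The plan is to establish the claim edge-by-edge via an infinitesimal covariance computation in the horospherical representation, and then extract convex-order monotonicity from that. By the fundamental theorem of calculus it suffices to show, for any fixed edge $ab$, that
\begin{equation*}
\partial_{\beta_{ab}} \langle f(e^{t_j}) \rangle_\beta^0 = \operatorname{Cov}_\beta^0\bigl(f(e^{t_j}),\, -\partial_{\beta_{ab}} \tilde H_\beta^0\bigr) \ge 0
\end{equation*}
for every convex $f$, where $\tilde H_\beta^0$ is the pinned version of~\eqref{e:tildeH} with $n=2$, $m=1$ and $t_0 = 0$. Using~\eqref{e:Deltabetat} one computes
\begin{equation*}
-\partial_{\beta_{ab}} \tilde H_\beta^0(t) = -\bigl(\cosh(t_a-t_b) - 1\bigr) + \tfrac12 e^{t_a+t_b}\bigl(G_0^t(a,a) - 2G_0^t(a,b) + G_0^t(b,b)\bigr),
\end{equation*}
where $G_0^t$ is the Green function of $-\Delta_{\beta(t)}$ pinned at $0$; the determinant contribution is non-negative and, via the matrix-tree theorem, admits an expansion as a weighted sum over spanning trees through $ab$.

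The second step is to rephrase the claim as a convex-order comparison and produce either a martingale coupling or an FKG argument. Equivalently, one wants to show that the law of $e^{t_j}$ under $\beta'$ dominates that under $\beta$ in convex order whenever $\beta' \ge \beta$; by Strassen's theorem this is equivalent to the existence of a martingale coupling between these two laws. A natural candidate is suggested by the STZ random Schr\"odinger representation (Theorem~\ref{thm:stz}): the STZ field has a Laplace transform that factorises edge by edge, so an increment $\beta_{ab} \mapsto \beta_{ab} + \varepsilon$ multiplies the density by a function of $(B_a,B_b)$ alone. Conditioning on a judiciously chosen sub-$\sigma$-algebra should realise $e^{t_j}|_\beta$ as the conditional expectation of $e^{t_j}|_{\beta'}$, delivering the martingale property and hence the convex inequality by Jensen.

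The hardest step is making the martingale coupling work: $e^{t_j}$ depends on the full STZ field nonlinearly through the Green function of $-\Delta_\beta + V(B)$, so the required identification of a conditional expectation with $e^{t_j}$ under $\beta$ demands a genuine combinatorial or analytic input. An appealing alternative that sidesteps the coupling is to prove log-concavity of the $t$-field density in suitable coordinates (exploiting the random Schr\"odinger change of variables from Section~\ref{sec:STZ}, in which $V$ enters linearly), which would let one apply FKG once it is verified that $f(e^{t_j})$ and the force $-\partial_{\beta_{ab}} \tilde H_\beta^0$ are comonotone in the new coordinates. Either way the delicate point is controlling the determinant term; the supersymmetric localisation of Theorem~\ref{thm:susyloc}, which fixes $\langle z_i \rangle$ and $\langle u_i\cdot u_j\rangle$ in the unpinned model, is expected to enter here to pin down normalisations and handle the boundary corrections coming from the pinning at $0$.
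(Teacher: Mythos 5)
Your overall strategy --- reformulate the claim as a convex-order comparison and construct a martingale coupling via the STZ field --- does align with the route taken in \cite{1911.02181}, which the paper cites, so you have identified the right tools. But two substantive gaps remain. The first is that you underplay the role of supersymmetric localisation: the convex-order/Strassen framework is only applicable because the means agree, i.e.\ $\langle e^{t_j}\rangle_\beta^0 = \langle z_j \rangle_\beta^0 = 1$ \emph{independently of} $\beta$, by the pinned analogue of Theorem~\ref{thm:susyloc}. This is not merely a normalisation-bookkeeping device, as your last sentence suggests; it is the structural fact that makes the convex-order reformulation well posed at all, since otherwise the laws of $e^{t_j}$ under $\beta$ and $\beta'$ could have different means and no martingale coupling could exist.

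The second and more serious gap is that the martingale coupling --- which you yourself flag as the hardest step --- is never constructed. ``Conditioning on a judiciously chosen sub-$\sigma$-algebra should realise $e^{t_j}|_\beta$ as the conditional expectation of $e^{t_j}|_{\beta'}$'' is precisely the statement that needs proof, and it does not follow from what you write: the edge-by-edge factorisation of the Laplace transform in Theorem~\ref{thm:stz} is suggestive but does not by itself produce the required conditional-expectation identity, since (as you acknowledge) $e^{t_j}$ depends on the full STZ field nonlinearly through the resolvent of $-\Delta_\beta+V$. The argument in \cite{1911.02181} supplies the missing combinatorial input by realising the increase of an edge weight via an auxiliary graph modification and exploiting the restriction/marginal structure of the STZ field across graphs; none of that appears here. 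Your alternative route via log-concavity and FKG is almost certainly a dead end: these hyperbolic models lack FKG, and even negative correlation for the closely related $\mathbb{H}^{0|2}$ model remains open, as discussed in Section~\ref{sec:forests}. Finally, the covariance computation in your first paragraph is correct as a formula, but you give no route to the claimed non-negativity and it plays no role in what follows.
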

The proof of Theorem~\ref{thm:poudevigne} 
relies on the STZ field~\cite{1911.02181}. This inequality implies that transience is a monotone property with
respect to the constant initial reinforcement parameter $\beta$. Combined with the
results of the previous paragraph, this implies that the VRJP has a
sharp transition from almost sure recurrence to almost sure transience
on $\mathbb{Z}^{d}$ for constant 
$\beta$: recurrence for $\beta<\beta_{c}(d)$ and transience for $\beta>\beta_{c}(d)$.
The behaviour at $\beta_{c}$ is  open.
Poudevigne's correlation inequality also leads to a proof of recurrence in $d=2$.

\subsection{Further discussion}
\label{sec:further-walks}

\paragraph{Back to edge-reinforced random walk}
\label{sec:edge-reinf-rand}

The connection of the ERRW to the $\mathbb{H}^{2|2}$ model is somewhat
less direct than for the VRJP: it turns out that the ERRW is an
average of VRJPs \cite{MR3420510}. Somewhat more precisely, ERRW with
initial edge weights $\alpha$ can be obtained from the VRJP with
initial edges weights $\beta$ if the $\beta_{ij}$ are chosen to be
independent Gamma random variables with mean $\alpha_{ij}$. While this
additional randomness presents some difficulties, the existence of a
transient phase for the ERRW in $d\geq 3$ was obtained by similar
methods to that of the VRJP~\cite{MR3366053}. In terms of the spin
model, the Gamma distributed random edge weights correspond to
replacing the exponential
$e^{\sum_{ij} \beta_{ij}(u_i \cdot u_j+1)}$ by
$\prod_{ij} (-u_i\cdot u_j)^{-\alpha_{ij}}$ in the (super)
measure. Such a product weight is often called a Nienhuis interaction.

Interestingly, the recurrence of the ERRW in two dimensions was
obtained before the recurrence of the VRJP. This was possible due to
insights of Merkl and Rolles, who directly proved a Mermin--Wagner
type theorem for the ERRW by making use of the magic
formula~\cite{MR2561431}. Merkl and Rolles were able to conclude recurrence of
the ERRW on $\mathbb{Z}^{2}$ for strong reinforcement if each edge of
the lattice was replaced by a long path. Sabot and Zeng's proved
recurrence on $\mathbb{Z}^{2}$ for all reinforcement strengths
by obtaining a characterisation of recurrence in terms of the STZ
field~\cite{MR3904155}, and showing that an estimate from~\cite{MR2561431} implies
recurrence. The ergodic properties of the STZ field play a crucial
role in this argument.

\paragraph{Beyond $\mathbb{Z}^{d}$} There are also results for the
VRJP beyond $\mathbb{Z}^{d}$. The existence of a transition on trees
was proven in~\cite{MR2027294}, and on non-amenable graphs
in~\cite{MR3189433}. A fairly complete understanding on trees has been
obtained, see~\cite{MR2985176} and references therein. 

\paragraph{Future directions}
There remain many open questions. What is the critical behaviour of
the VRJP and the $\mathbb{H}^{2|2}$ model on $\mathbb{Z}^{d}$, $d\geq
3$? Is there an upper critical dimension?
For $d=3$ aspects of this question were studied numerically in
\cite{PhysRevB.54.12763}, and evidence was found for
the existence of a multifractal structure in the $\mathbb{H}^{2|2}$
model. Multifractal structure
is also expected near the Anderson transition for random Schr\"odinger operators.
For the regular tree (Bethe lattice), further remarkable critical behaviour
  was observed, in part numerically, in \cite{MR863830}. 
  This reference
concerns a more complicated sigma model,
but the main predictions also apply to the $\mathbb{H}^{2|2}$ model \cite{PhysRevB.54.12763}. 
On $\mathbb{Z}^{2}$ the VRJP is believed to be
\emph{positive} recurrent, i.e., exponentially localised, but this
important conjecture about the $\mathbb{H}^{2|2}$ model and the VRJP
remains open. The heuristic for positive recurrence 
is based on the (marginal) renormalisation group flow
and goes along with the prediction of asymptotic freedom at short distances~\cite[Section~4.3]{MR2728731}. Analogous predictions 
based on analogous heuristics exist for the 2d Heisenberg model, the 2d Anderson model, 4d non-abelian Yang-Mills theory,
and the 2d arboreal gas (discussed below).
Another question is
to understand the VRJP in $d\geq 3$ with non-constant initial local
times: Theorem~\ref{thm:BHS-iso} and results
of~\cite{MR2104878} suggest the VRJP is always transient if started with
initial local times given by the $z$-field of the $\mathbb{H}^{2}$
model. Understanding the properties of the $z$-field that destroy the
phase transition would be interesting.

\section{The arboreal gas and $\mathbb{H}^{0|2}$}
\label{sec:forests}

The arboreal gas is the uniform measure on (unrooted spanning) forests
of a weighted graph.  More precisely, given an undirected graph
$G=(\Lambda,E)$, a forest $F=(\Lambda,E(F))$ is an acyclic subgraph of
$G$ having the same vertex set as $G$.  Given an edge weight $\beta>0$
(inverse temperature) and a vertex weight $h\geq 0$ (external field),
the probability of an edge set $F$ under the arboreal gas measure is
\begin{equation} \label{e:P-forest}
  \mathbb{P}_{\beta,h}
  [F] = \frac{1}{Z_{\beta,h}}
  \beta^{|E(F)|} \prod_{T\in F} (1+h|V(T)|)
  \,
  \mathbf{1}(\text{$F$ is a forest})
\end{equation}
where $T\in F$ denotes that $T$ is a tree in the forest $F$, i.e., a
connected component of $F$.
We write $\mathbb{P}_\beta = \mathbb{P}_{\beta,0}$.  As for the VRJP, the
generalisation to edge- and vertex-dependent weights
$\beta=(\beta_{ij})$ and $h=(h_i)$ is straightforward, and is sometimes
useful.

The arboreal gas arises naturally in the context of the $q$-state
random cluster model ($q$-RCM), which we recall is the model defined
by~\eqref{e:P-forest} by omitting the indicator function and instead
weighting each component by a factor $q>0$. In particular, $q=1$ is
Bernoulli bond percolation. On a finite graph, the arboreal gas edge
weight $\beta'$ is the limit of the $q$-RCM as $q,\beta\to 0$ such
that $\beta/q\to\beta'$, and it is natural to think of the arboreal
gas as the $0$-RCM.  The most fundamental question about the arboreal
gas is whether or not it has a percolation phase transition.  It is
straightforward to establish a subcritical phase when $\beta$ is
small: the arboreal gas can be stochastically dominated by bond
percolation~\cite[Theorem~3.21]{MR2243761}, and for $\beta$ small the
domination is by subcritical percolation.

\subsection{Phase transitions for the arboreal gas}
\label{sec:arbphase}

The existence of a supercritical phase for the
arboreal gas is a more subtle question than
for the $q$-RCM with $q>0$. One way to see this subtlety is based on
symmetries. To discuss this, recall that
for $q\in\{2,3,\dots\}$ there is a 
connection between the $q$-RCM and the $q$-state Potts
model~\cite{MR359655}. In particular, spin-spin correlations in the
$q$-state Potts model are equivalent to connection probabilities in
the $q$-RCM. The results of \cite{MR2110547,MR2142209} extend
  this relationship to $q=0$: the $\mathbb{H}^{0|2}$ model is a
spin representation of the arboreal gas:
\begin{theorem}
  \label{thm:h02arb}
  Let $\langle\cdot\rangle_{\beta}$ and $\mathbb{P}_{\beta}$ denote
  the $\mathbb{H}^{0|2}$ and arboreal gas measures on a finite
  graph. For vertices $i,j \in \Lambda$,
  \begin{equation}
    \label{eq:h02arb}
    \mathbb{P}_{\beta}[i\leftrightarrow j] = -\langle u_{i}\cdot u_{j}\rangle_{\beta}.
  \end{equation}
  Moreover, the partition functions of the $\mathbb{H}^{0|2}$ model
  and the arboreal gas coincide.
\end{theorem}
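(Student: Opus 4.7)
The plan is to expand the $\mathbb{H}^{0|2}$ Boltzmann weight into a finite polynomial in the Grassmann generators using nilpotency, perform the Berezin integration termwise, and identify the surviving monomials with a sum over spanning forests of $G$.

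First, I would rewrite the edge interaction algebraically. Since $u_i\cdot u_i=-1$, one has $(u_i-u_j)\cdot(u_i-u_j)=-2(1+u_i\cdot u_j)$, and a direct expansion using $z_i=1-\xi_i\eta_i$ gives
\begin{equation*}
1+u_i\cdot u_j=(\xi_i-\xi_j)(\eta_i-\eta_j)-\xi_i\eta_i\xi_j\eta_j.
\end{equation*}
Call the two summands $a_{ij}$ and $b_{ij}$. Both are nilpotent and their product vanishes in $\Omega_\Lambda$, so $e^{\beta(1+u_i\cdot u_j)}=1+\beta a_{ij}+\beta b_{ij}$. Combined with $1/z_i=1+\xi_i\eta_i$ and $e^{-hz_i}=e^{-h}(1+h\xi_i\eta_i)$, the full integrand of $Z_{\beta,h}$ becomes an explicit polynomial whose expansion is naturally organised by choosing, for each edge, either $1$, a \emph{hopping} contribution $\beta a_{ij}$, or a \emph{loop} contribution $\beta b_{ij}$.

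Next, I would identify which subgraph configurations survive the Berezin integration. Since each vertex must appear with exactly one $\xi_i$ and one $\eta_i$ for the integral to be non-zero, loop edges saturate their endpoints and are vertex-disjoint from every other selected edge; the hopping edges must together produce exactly one $\xi_i$ and one $\eta_i$ on the remaining vertices. Expanding the antisymmetric factors $(\xi_i-\xi_j)(\eta_i-\eta_j)$ and tracking signs, any cycle in the hopping subgraph makes the contributions cancel — this is the Grassmann incarnation of the matrix-tree phenomenon. Hence only forests survive, and the union of hopping and loop edges forms a spanning forest $F$ of $G$. The cleanest way to collect the contribution of $F$ is to perform the Gaussian Grassmann integral over the hopping sector directly, obtaining principal minors of the weighted Laplacian $L_\beta$, and then to invoke the all-minors (Chebotarev--Shamis) matrix-tree theorem to rewrite each minor as a weighted sum over spanning forests with the correct per-tree factors. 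A careful bookkeeping with the vertex weights $(1+(1+h)\xi_i\eta_i)$ shows that the total contribution of $F$ is $\beta^{|E(F)|}\prod_{T\in F}(1+h|V(T)|)$, the arboreal gas weight; summing over $F$ yields the partition function identity.

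For the two-point function, I would insert $-u_i\cdot u_j=z_iz_j+\xi_i\eta_j+\xi_j\eta_i$ into the integrand and repeat the expansion. The source/sink Grassmann terms $\xi_i\eta_j+\xi_j\eta_i$ remove the saturation requirement at $i$ and $j$ and instead force the hopping subgraph to contain a path from $i$ to $j$, which is possible precisely when $i,j$ lie in a common tree of the underlying spanning forest; the $z_iz_j$ term restores the contributions from forests in which the tree through $i$ is covered by loop edges rather than hopping edges, and these recombine without double counting. The net effect is
\begin{equation*}
-\langle u_i\cdot u_j\rangle_{\beta}\,Z_{\beta} = \sum_F \beta^{|E(F)|}\prod_{T\in F}(1+h|V(T)|)\,\mathbf{1}[i\stackrel{F}{\leftrightarrow}j],
\end{equation*}
which is $\mathbb{P}_{\beta}[i\leftrightarrow j]\,Z_{\beta}$ by the partition function identity. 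The hard part is the combinatorial cycle-cancellation and per-tree bookkeeping in the second paragraph — in particular, verifying that the interplay between hopping edges, loop edges, and vertex weights produces exactly the factors $(1+h|V(T)|)$. Routing this through the all-minors matrix-tree theorem makes the combinatorics transparent; a direct argument by induction on edges or by cycle-contraction is possible but more cumbersome.
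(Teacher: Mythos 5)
The paper does not prove Theorem~\ref{thm:h02arb} directly; it cites \cite{MR2110547,MR2142209,MR4218682}, where the argument is exactly the Grassmann-combinatorial expansion you describe. Your algebraic preliminaries are correct: since $u_i\cdot u_i=-1$ one has $\frac{1}{2}(u_i-u_j)\cdot(u_i-u_j)=-(1+u_i\cdot u_j)$, the identity $1+u_i\cdot u_j=(\xi_i-\xi_j)(\eta_i-\eta_j)-\xi_i\eta_i\xi_j\eta_j$ holds, both summands square to zero with vanishing cross-term, so $e^{\beta(1+u_i\cdot u_j)}=1+\beta a_{ij}+\beta b_{ij}$, and the per-vertex factors combine into $e^{-h}(1+(1+h)\xi_i\eta_i)$. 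This is the right starting point, and your overall plan (cycle cancellation selects forests; the $b$-term and vertex factors conspire to produce the per-tree weight; insert $-u_i\cdot u_j$ and redo the expansion) is the one used in the cited work.

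There is, however, a concrete error in your description of how the per-tree factor $(1+h|V(T)|)$ arises, and it would derail the bookkeeping if followed literally. You assert that a loop edge $b_{ij}=-\xi_i\eta_i\xi_j\eta_j$ is ``vertex-disjoint from every other selected edge.'' That is false: a selected hopping edge $a_{ik}=(\xi_i-\xi_k)(\eta_i-\eta_k)$ may share the vertex $i$ with $b_{ij}$, provided $a_{ik}$ contributes its $\xi_k\eta_k$ monomial (which involves none of $\xi_i,\eta_i$). Indeed this is exactly what happens for any tree $T$ with $|V(T)|\ge 3$: the correct count for a single tree is
\begin{equation*}
  \underbrace{|V(T)|\,(1+h)}_{\text{all $a$-edges, one vertex factor}}
  \;-\;\underbrace{(|V(T)|-1)}_{\text{one $b$-edge somewhere in $T$, no vertex factor}}
  \;=\;1+h|V(T)|,
\end{equation*}
and the second contribution requires $b$-edges sitting inside the tree adjacent to $a$-edges. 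If loop edges were truly vertex-disjoint from everything else, a $k$-vertex tree ($k\ge 3$) would receive weight $k(1+h)$ rather than $1+hk$, contradicting the arboreal gas weight \eqref{e:P-forest}. Two or more $b$-edges inside one tree do vanish, but for a degree-counting reason, not a vertex-disjointness one. Relatedly, routing the hopping sector through the all-minors matrix-tree theorem gives only the first term above (a sum of the form $\prod_T\sum_{v\in T}d_v$); the $b$-edge correction is an extra combinatorial layer on top, so the MTT alone does not ``make the combinatorics transparent'' for this particular weight. Your final formula and your two-point insertion $-u_i\cdot u_j=z_iz_j+\xi_i\eta_j+\xi_j\eta_i$ are correct (the $z_iz_j$ term cancels the $1/z_i,1/z_j$ reference-measure factors and ends up contributing with a sign opposite to the $\xi_i\eta_j+\xi_j\eta_i$ terms), so this is a fixable bookkeeping slip rather than a wrong strategy, but as written the key intermediate claim is not true and should be replaced by the degree/compatibility argument just sketched.
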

Strictly speaking, the $\mathbb{H}^{0|2}$ formulation of
Theorem~\ref{thm:h02arb} first occurred in~\cite{MR4218682} as a
reformulation of~\cite{MR2110547,MR2142209}; the hyperbolic point of view
plays an important role in the proof of
Theorem~\ref{thm:arboreal-magic} below.
Theorem~\ref{thm:h02arb} suggests the existence of a supercritical
phase for the arboreal gas may depend on the dimension, as strong
connection probabilities corresponds to a symmetry breaking phase
transition for the $\mathbb{H}^{0|2}$ model. Unlike the $q$-Potts
models with $q\in\{2,3,\dots\}$, this model possesses a continuous
symmetry, so one might expect a Mermin--Wagner theorem to prevent such
a transition in $d=2$. This is indeed true:

\begin{theorem}[{\cite[Theorem~1.3]{MR4218682}}] 
  \label{thm:arbd2}
  Let $d=2$.
  For any $\beta> 0$, there exists $c(\beta)>0$
  such that $\mathbb{P}_\beta^{\Lambda}[0\leftrightarrow j] \leq
  |j|^{-c(\beta)}$
  for any $\Lambda \subset \mathbb{Z}^2$.
\end{theorem}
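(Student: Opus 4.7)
The plan is to translate the statement into a spin-system question via Theorem~\ref{thm:h02arb} and then run a McBryan--Spencer/Mermin--Wagner style argument exploiting the non-compact hyperbolic symmetry of the $\mathbb{H}^{0|2}$ model. Concretely, Theorem~\ref{thm:h02arb} gives $\mathbb{P}_\beta^\Lambda[0\leftrightarrow j] = -\langle u_0\cdot u_j\rangle_\beta^\Lambda$, so it suffices to prove that this spin-spin correlation is bounded by $|j|^{-c(\beta)}$ uniformly in finite $\Lambda\subset \mathbb{Z}^2$.

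First, I would lift to a model with bosonic degrees of freedom using the $\mathbb{H}^{0|2}\leftrightarrow\mathbb{H}^{2|4}$ sibling correspondence (the generalisation of Theorem~\ref{thm:susyloc} described in the ``Beyond'' paragraph), and then pass to horospherical coordinates~\eqref{e:Hhoro}. Integrating out the $s$ and $\psi,\bar\psi$ variables yields the positive $t$-marginal described in~\eqref{e:tildeH}, on which a classical change of variables is available. Shifting $t_i \mapsto t_i + a_i$ by a real test function $a\colon\Lambda\to\mathbb{R}$ with $a_0 = R$ and $\mathrm{supp}(a)\subset B(0,|j|)$, the action~\eqref{e:Hhoro} changes by the Dirichlet energy $\tfrac{\beta}{2}\sum_{ij\in E}(a_i-a_j)^2$ up to higher-order corrections from the full $\cosh$ and a controlled variation of the log-determinant term. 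Combining this with the representation of the spin-spin correlator in the shifted variables and taking the spin at $0$ to ``see'' a factor $e^{-R}$ from the shift produces an inequality
\begin{equation*}
   -\langle u_0\cdot u_j\rangle_\beta^\Lambda \;\leq\; \exp\!\Bigl(-R + C\beta \sum_{ij\in E}(a_i-a_j)^2 + \text{(remainder)}\Bigr).
\end{equation*}

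Next, I would optimise over $a$ using discrete potential theory in two dimensions: the minimal Dirichlet energy subject to $a_0=R$ and $a=0$ outside $B(0,|j|)$ is of order $R^2/\log|j|$, the familiar $2$d logarithmic capacity. Choosing $a$ to be the corresponding discrete harmonic function and optimising over $R$ gives the bound $|j|^{-c(\beta)}$ with $c(\beta)$ independent of $\Lambda$. By monotonicity in $\Lambda$ this extends to all finite subgraphs of $\mathbb{Z}^2$.

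The main obstacle will be making the shift rigorous and controlling the non-quadratic, non-local parts of~\eqref{e:tildeH}: specifically, the full $\cosh$ interaction and the log-determinant $\log\det(-\Delta_{\beta(t)}+h(t))$. The $\cosh$ terms can be handled by choosing $a$ with small edge gradients (the harmonic test function satisfies $|a_i - a_j| = O(R/\log|j|)$, which is small for moderate $R$), so that $\cosh(a_i-a_j)-1$ is well approximated by $(a_i-a_j)^2/2$ with an explicit remainder. For the log-determinant, one can exploit its convexity/monotonicity in the edge weights together with integration-by-parts identities of the type that underlie the STZ-field analysis of Section~\ref{sec:STZ}, to show that its variation under the shift is again dominated by the Dirichlet energy. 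Assembling these estimates with the sibling correspondence then yields the claimed uniform polynomial upper bound on the connection probability.
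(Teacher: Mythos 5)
Your first steps reproduce the paper's strategy exactly: applying Theorem~\ref{thm:h02arb}, lifting $\mathbb{H}^{0|2}$ to $\mathbb{H}^{2|4}$ via supersymmetric localisation, passing to horospherical coordinates, and integrating out the $s,\psi,\bar\psi$ variables is precisely how the paper arrives at Theorem~\ref{thm:arboreal-magic}. Your proposed 2d Dirichlet-energy optimisation, yielding a capacity of order $R^2/\log|j|$ and hence a power law after optimising over $R$, is also the right shape of Mermin--Wagner estimate one hopes to extract. So the scaffolding is correct.

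The genuine gap is your treatment of the determinant. Once you are in the $t$-marginal~\eqref{eq:arboreal-magic}, the factor $({\det}^0(-\Delta_{\beta(t)}))^{3/2}$ is not a perturbative remainder to the $\cosh$ interaction; it encodes the nonlocal effect of the $s$ and Grassmann fields and is the \emph{central obstruction} to a naive shift argument. Under $t\mapsto t+a$ the edge weights $\beta e^{t_i+t_k}$ are multiplied by $e^{a_i+a_k}$, and the resulting change in $\log{\det}^0(-\Delta_{\beta(t)})$ is a nonlocal functional of $a$ with no a priori sign and no obvious domination by $\sum_{ik}(a_i-a_k)^2$. Appealing to ``convexity/monotonicity in the edge weights together with integration-by-parts identities of the type that underlie the STZ-field analysis'' is a plan, not an argument: monotonicity of $\log\det$ in the edge weights pushes in the wrong direction when $a$ increases some $e^{a_i+a_k}$, and the relevant STZ-field identities are for Laplace transforms of the $B$-field, not for ratios of determinants under a prescribed shift. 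This is precisely where the existing proofs had to introduce real ideas: the earlier Mermin--Wagner result in~\cite{MR4021254} avoids the determinant altogether by working with the full supersymmetric representation and a complex rotation, while the paper's route for Theorem~\ref{thm:arbd2} is to reduce via Theorem~\ref{thm:arboreal-magic} to an adaptation of Sabot's 2d polynomial localisation~\cite[Theorem~1]{MR4218029}, whose content is exactly a controlled handling of this determinant-weighted $t$-marginal. Your proof proposal defers this, the hardest step, to a remark.

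A secondary issue: your bookkeeping of the shift is inconsistent with the pinning. In~\eqref{eq:arboreal-magic} the integration is over $\mathbb{R}^{\Lambda\setminus 0}$ with $t_0=0$ fixed, so a shift $t_i\mapsto t_i+a_i$ must have $a_0=0$; you set $a_0=R$ and then claim the spin at $0$ ``sees'' a factor $e^{-R}$, which is not how pinned variables behave. The correct formulation (subtracting $a_0$ from $a$ and tracking the resulting factor on the observable $e^{t_j}$) can be made consistent, but as written the claimed intermediate inequality $-\langle u_0\cdot u_j\rangle_\beta^\Lambda\leq\exp(-R+C\beta\sum(a_i-a_j)^2+\text{remainder})$ is not derived. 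Together with the determinant issue, the proposal does not yet constitute a proof.
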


It is possible to predict Theorem~\ref{thm:arbd2} without knowing
about the $\mathbb{H}^{0|2}$ spin representation as
follows~\cite{PhysRevLett.98.030602}. The critical value of $\beta$
for the $q$-RCM on $\mathbb{Z}^{2}$ with $q\geq 1$ is known to be
$\beta_{c}(q)= \sqrt{q}$~\cite{MR2948685}, and this self-dual point is
predicted to be the critical point for all $q>0$.  Since the arboreal
gas $\mathbb{P}_{\beta'}$ is the limit of the $q$-RCM with
$\beta=\beta' q$, if the location of the critical point is continuous
as $q\downarrow 0$, it follows that $\beta_{c}(0)=\infty$.  These
heuristics support the conjecture that connection probabilities of the
$2d$ arboreal gas decay exponentially for any $\beta>0$.  Independent
support for this conjecture can be obtained by renormalisation group
heuristics, almost exactly as for the $2d$ VRJP~\cite{MR2110547}.

Turning the preceding paragraph into a rigorous proof would be very
interesting.  It would also be interesting to have a probabilistic
proof (in terms of forests) that the arboreal gas does not have a
phase transition on $\mathbb{Z}^2$.  The proof of
Theorem~\ref{thm:arbd2} follows different lines. A key step is the
following, which reduces the proof to an adaptation
of~\cite[Theorem~1]{MR4218029}.
\begin{theorem}[Magic formula for arboreal gas]
  \label{thm:arboreal-magic}
  Let $G=(\Lambda,E)$ be a finite connected graph. For vertices $0,j\in \Lambda$,
  \begin{equation}
    \label{eq:arboreal-magic}
    \mathbb{P}_\beta[0\leftrightarrow j] =
    \frac{1}{Z_\beta}
    \int_{\mathbb{R}^{\Lambda\setminus 0}} e^{t_j} \, e^{-\frac\beta 2 \sum_{i,k} \cosh(t_i-t_k)} ({\det}^0(-\Delta_{\beta(t)}))^{3/2} \prod_{k\in\Lambda\setminus 0}\, e^{-3t_k} \, dt_k,
  \end{equation}
  where ${\det}^0(-\Delta_{\beta(t)})$ denotes any principal cofactor
  of $-\Delta_{\beta(t)}$.
\end{theorem}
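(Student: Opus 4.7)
The plan is to combine the spin representation of Theorem~\ref{thm:h02arb} with a lift from the $\mathbb{H}^{0|2}$ model to the $\mathbb{H}^{2|4}$ model so as to access horospherical coordinates. The parameters $n=2,\, m=2$ of $\mathbb{H}^{2|4}$ give $n-2m-1=-3$, which produces exactly the exponents $3/2$ on ${\det}^{0}(-\Delta_{\beta(t)})$ and $-3$ on each $t_{k}$ appearing in~\eqref{eq:arboreal-magic}. The cosh interaction and the $e^{t_{j}}$ insertion come out of the horospherical parametrisation in a standard way.

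\textbf{Main steps.} First, I would apply Theorem~\ref{thm:h02arb} to obtain $\mathbb{P}_{\beta}[0\leftrightarrow j] = -\langle u_{0}\cdot u_{j}\rangle_{\beta}$. Passing to the expectation pinned at $u_{0}=(0,0,1)$ (corresponding to designating $0$ as a root in the arboreal gas), the super inner product simplifies to $u_{0}\cdot u_{j}=-z_{j}$, so
\[
\mathbb{P}_{\beta}[0\leftrightarrow j] \;=\; \langle z_{j}\rangle_{\beta}^{0}.
\]
Second, I would invoke the exact correspondence between observables of the $\mathbb{H}^{n|2m}$ and $\mathbb{H}^{n+2|2m+2}$ models recalled after Theorem~\ref{thm:susyloc}, rewriting $\langle z_{j}\rangle_{\beta}^{0}$ as the corresponding pinned expectation in the $\mathbb{H}^{2|4}$ model, for which horospherical coordinates are available. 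Third, I would apply the horospherical parametrisation~\eqref{e:Hhoro}; since the $s^{i}$ and $(\psi^{i},\bar\psi^{i})$ components appear quadratically in $H_{\beta}$, they can be integrated out by exact Gaussian computation, producing the positive-measure $t$-marginal $e^{-\tilde H_{\beta}(t)}\,dt$ given by~\eqref{e:tildeH}. For $n=2,\, m=2$ this reference measure contributes precisely $({\det}^{0}(-\Delta_{\beta(t)}))^{3/2}\prod_{k\neq 0}e^{-3t_{k}}$ together with the cosh interaction, while $t_{0}=0$ is forced by the pinning and makes the pinned Laplacian invertible, justifying the principal cofactor notation. Finally, the $e^{t_{j}}$ factor in the integrand comes from the identity $\langle z_{j}\rangle = \langle z_{j}+x_{j}\rangle = \langle e^{t_{j}}\rangle$ noted below~\eqref{e:Deltabetat}: the first equality holds by the $x\mapsto -x$ residual symmetry and the second is the horospherical formula $z+x=e^{t}$.

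\textbf{Main obstacle.} The delicate step is the lift from $\mathbb{H}^{0|2}$ to $\mathbb{H}^{2|4}$: one must verify that the observable $z_{j}$ and the normalisation $Z_{\beta}$ both transfer consistently, tracking the factors of $2\pi$ generated by the additional bosonic integrations against the reweighting built into the correspondence. Once this bookkeeping is settled, the remainder of the argument is a direct adaptation of the horospherical calculation already underlying Theorem~\ref{thm:VRJP-magic} for the VRJP, with the value of $n-2m-1$ changed from $-1$ to $-3$.
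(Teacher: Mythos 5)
Your proposal follows exactly the route the paper indicates for Theorem~\ref{thm:arboreal-magic}: apply Theorem~\ref{thm:h02arb} to express the connection probability as an $\mathbb{H}^{0|2}$ correlation, lift it to $\mathbb{H}^{2|4}$ via the $\mathbb{H}^{n|2m}\leftrightarrow\mathbb{H}^{n+2|2m+2}$ SUSY correspondence, and then pass to horospherical coordinates and Gaussian-integrate out the $s$ and $\psi,\bar\psi$ variables so that only the $t$-marginal remains. Your arithmetic $n-2m-1=-3$ for $(n,m)=(2,2)$, yielding $({\det}^{0}(-\Delta_{\beta(t)}))^{3/2}$ and $e^{-3t_k}$, and the reduction of the observable via $\langle z_j\rangle^0_\beta=\langle e^{t_j}\rangle^0_\beta$, match the paper's reasoning, so this is essentially the same proof.
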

In outline, the proof of Theorem~\ref{thm:arboreal-magic} consists of
three steps: Theorem~\ref{thm:h02arb}, rewriting the
$\mathbb{H}^{0|2}$ expectation in terms of $\mathbb{H}^{2|4}$ by SUSY
localisation, and then changing to horospherical coordinates and
integrating out all but the $t$-field. The magic formula for the VRJP
from Theorem~\ref{thm:VRJP-magic} has a strikingly similar form, but
with the two occurrences of $3$s in~\eqref{eq:arboreal-magic} replaced
by $1$s. This difference in powers is due to there being two
additional Grassmann Gaussian integrals for $\mathbb{H}^{2|4}$ as
compared to $\mathbb{H}^{2|2}$.

In three and more dimensions the arboreal gas does, however, undergo a
percolation phase transition.  To state a precise theorem, let
$\Lambda_{N} = \mathbb{Z}^{d}/L^{N}\mathbb{Z}^{d}$ denote a torus of
side-length $L^{N}$ with $L$ large. The next theorem immediately
implies that there is a macroscopic tree occupying most of the torus
with large probability.
\begin{theorem}[{\cite[Theorem~1.1]{2107.01878}}]
  \label{thm:arbd3+}
  Let $d\geq 3$. If $\beta$ is sufficiently large, then there exists
  $\theta_{d}(\beta)=1-O(1/\beta)$, $D(\beta)>0$, and $\kappa>0$ such that
  \begin{equation}
    \label{eq:arbd3+}
    \mathbb{P}^{\Lambda_N}_\beta[0\leftrightarrow j] = \theta_d(\beta)^2 +
    D(\beta)(-\Delta)^{-1}(0,j) +
    O\left(\frac{1}{\beta|j|^{d-2+\kappa}}\right).
  \end{equation}
  Similar asymptotics hold for other correlation functions.
\end{theorem}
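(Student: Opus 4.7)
The plan is to build on the magic formula of Theorem~\ref{thm:arboreal-magic}, which identifies the left-hand side of~\eqref{eq:arbd3+} with the expectation of $e^{t_j}$ under an explicit positive $t$-field measure on $\Lambda_N\setminus\{0\}$ with $t_0$ pinned at $0$. The task then becomes a large-$\beta$ analysis of this $t$-field integral, uniform in $N$ and $|j|$, from which the leading and subleading behaviour must be extracted.

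First I would identify the minimiser $t_\star$ of the effective action by balancing the three terms in the integrand of~\eqref{eq:arboreal-magic}: the $\cosh(t_i-t_k)$ coupling favours constant configurations, the weight $e^{-3t_k}$ pushes $t$ downward, and $\log{\det}^0(-\Delta_{\beta(t)})$ (expanded via the matrix-tree theorem) pushes it upward. A constant-configuration saddle analysis shows that the minimum sits near $t\equiv t_\star$ with $t_\star=O(1/\beta)$. Shifting $t_i=t_\star+\tau_i$ and rescaling $\tau=\phi/\sqrt{\beta}$, the integrand becomes a perturbation of a massless Gaussian $\tfrac12(\phi,-\Delta\phi)$ pinned at $0$, plus a nonlocal quadratic correction coming from the log-determinant and higher-order interactions from the Taylor expansion of $\cosh$.

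Second, I would extract the leading asymptotics of $\mathbb{E}[e^{t_j}]$ from this Gaussian model. Since in $d\geq 3$ the covariance $\mathbb{E}[\phi_0\phi_j]/\beta$ converges in infinite volume to $\beta^{-1}(-\Delta)^{-1}(0,j)$, which is bounded and decays like $|j|^{-(d-2)}$, Taylor expanding $e^{t_j}=e^{t_\star+\phi_j/\sqrt\beta}$ produces
\[
\mathbb{E}[e^{t_j}] = \theta_d(\beta)^2 + D(\beta)(-\Delta)^{-1}(0,j) + \text{remainder},
\]
where $\theta_d(\beta)^2$ captures the $|j|\to\infty$ asymptote, built from $e^{t_\star}$ and the bulk Gaussian variance, and $D(\beta)$ is the Goldstone stiffness emerging from the second-order term in the exponential. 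The identification $\theta_d(\beta)=1-O(1/\beta)$ follows from $t_\star=O(1/\beta)$ and uniform bounds on the bulk variance of $\phi$.

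The main obstacle is controlling the remainder at the rate $O(\beta^{-1}|j|^{-(d-2+\kappa)})$ uniformly in $N$. The non-Gaussian terms arising from higher-order expansion of $\cosh$, together with the nonlocal log-determinant, must be handled by a rigorous renormalisation group analysis -- for instance a Brydges--Slade finite-range decomposition of $(-\Delta_\beta)^{-1}$ combined with a polymer expansion, in the spirit of Disertori--Spencer--Zirnbauer's analysis of $\mathbb{H}^{2|2}$ adapted to the arboreal-gas integrand. The essential ingredients are: a scale-by-scale decomposition of the Gaussian covariance; renormalisation of the coupling constants driving $t_\star$, $\theta_d$, and $D$; and uniform bounds on polymer activities in the presence of the nonlocal log-determinant. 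The extra decay $\kappa>0$ should come from the irrelevance (in the RG sense) of the first non-Gaussian correction, whose contribution decays faster than the Green's function. Making this rigorous while maintaining uniformity in $N$ and tracking the Goldstone-mode decay through every scale is where the bulk of the technical work lies.
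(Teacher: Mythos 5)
Your plan is broadly aligned with the high-level strategy the paper attributes to \cite{2107.01878}: passing to the horospherical $t$-field (via the $\mathbb{H}^{0|2}$ representation of Theorem~\ref{thm:h02arb} and the magic formula of Theorem~\ref{thm:arboreal-magic}), expanding about a near-constant configuration, and treating the result as a perturbation of a massless free field to be controlled by a rigorous renormalisation group in the style of Brydges--Slade and Bauerschmidt--Brydges--Slade. The identification of $\theta_d(\beta)^2$ as the $|j|\to\infty$ plateau and of $D(\beta)(-\Delta)^{-1}(0,j)$ as the Goldstone contribution is also the correct picture.

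There is, however, one genuine gap: you never invoke the Ward identities coming from the hyperbolic symmetry, and the paper explicitly flags these as a key ingredient alongside the RG. They are not optional. After shifting and rescaling to $\phi$, the expansion of $\cosh(t_i-t_k)$, the log-determinant, and the measure factor $e^{-3t_k}$ all contribute quadratic-in-$\phi$ terms, so nothing prevents the RG flow from generating an effective mass for the Goldstone mode at each scale. If an effective mass survived, the two-point function would decay exponentially rather than like $(-\Delta)^{-1}(0,j)$, and the asserted polynomial correction would be false. The Ward identities (e.g.~\eqref{e:ward2} and its horospherical incarnations) tie the effective mass exactly to the external field $h$, forcing masslessness as $h\downarrow 0$ without any fine-tuning of initial couplings. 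Your proposal, as written, would either have to tune the theory onto a critical manifold --- which is a different and harder problem, not what the theorem asserts --- or would silently assume the mass stays zero. Relatedly, your analysis is framed around the pinned $h=0$ magic-formula integral, but the Ward-identity control is most naturally formulated for the $h>0$ model, with the pinned representation recovered afterwards; this affects the organisation of the argument.

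A smaller point: you describe the RG as ``in the spirit of Disertori--Spencer--Zirnbauer,'' but the paper cites the Brydges--Slade/Bauerschmidt--Brydges--Slade framework \cite{MR2523458,MR3332938,MR3332939,MR3339164,MR3345374}, which is a multi-scale polymer/coordinate RG quite different in structure from the Disertori--Spencer--Zirnbauer conditional-expectation/Ward-estimate induction for $\mathbb{H}^{2|2}$. Conflating the two obscures which technical toolbox actually produces the uniform-in-volume bounds and the extra $\kappa>0$ of decay.
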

The polynomial correction in Theorem~\ref{thm:arbd3+} is the hallmark
of critical behaviour in statistical mechanics, and is a manifestation
of the Goldstone mode associated with the broken continuous symmetry of the
$\mathbb{H}^{0|2}$ model at low temperatures. The proof of
Theorem~\ref{thm:arbd3+} relies essentially on the $\mathbb{H}^{0|2}$
representation (Theorem~\ref{thm:h02arb}), and is based on a
combination of Ward identities and a renormalisation group
analysis. The renormalisation group analysis is based in part on
methods developed previously in different contexts, in particular
\cite{MR2523458,MR3332938,MR3332939,MR3339164,MR3345374}.

\subsection{Further discussion}

In contrast to the VRJP, even the qualitative phase diagram of the
arboreal gas remains incomplete: we do not know the existence of a
$\beta_{c}$ such that percolation occurs if $\beta>\beta_{c}$, and
does not if $\beta<\beta_{c}$. It is also more difficult to discuss
the arboreal gas directly in the infinite volume limit than the VRJP;
the STZ field does not have finite dependence and is less obviously
useful, and useful correlation inequalities to this end remain
conjectural, see below.  Nonetheless, many open questions beckon.

\paragraph{Critical behaviour}
There is strong evidence that the upper critical dimension
of the arboreal gas is $d=6$, just as for bond percolation,
and that the critical behaviour is governed by more conventional
critical behaviour as compared to the $\mathbb{H}^{2|2}$ model~\cite{PhysRevLett.98.030602}.

\paragraph{Comparison with percolation}

Recall that the analogue of Theorem~\ref{thm:arbd3+} for Bernoulli
percolation has an \emph{exponentially} decaying
correction~\cite{MR1048927}. Informally this means that supercritical
percolation with the giant removed behaves like subcritical
percolation. This can be given a more precise meaning in the simpler
setting of the Erd\H{o}s--R\'{e}nyi random graph, i.e., Bernoulli
percolation on the complete graph $K_{N}$, where it is known as the
discrete duality principle~\cite[Section~10.5]{MR3524748}.

The polynomial correction in Theorem~\ref{thm:arbd3+} shows that the
arboreal gas does not satisfy a duality principle. Rather, its
supercritical phase behaves like a critical model off the giant. This
can again be given a more precise formulation on the complete graph
$K_{N}$ and on the wired regular tree, where detailed results are
known~\cite{MR1167294,MR3845513,2108.04335,2108.04287}.  In
particular, the exact cluster distribution can be determined: on $K_N$
in the supercritical phase there is a unique giant tree, and an
unbounded number of trees of size $\Theta(N^{2/3})$.

It is natural to predict the macroscopic behaviour of the arboreal gas
on $d$-dimensional tori $\Lambda_{N}$ with $d\geq 3$ is similar to
that on the complete graph. In particular, one expects a unique giant
tree. The next-order critical corrections can also be expected to be
similar, at least when $d\gg 3$. In particular, the second biggest
tree should have size comparable to $|\Lambda_N|^{2/3}$.  Similar
results have been established for critical Bernoulli percolation in
high dimensions, see~\cite[Chapter~13]{MR3729454}.  More ambitiously,
we expect the order statistics of the rescaled cluster size
distribution to be universal, i.e., the same as on the complete graph
as determined in~\cite{MR1167294,MR3845513}. This conjecture may be
easier to explore in other settings first, e.g., on expanders, where a
phase transition can be established by elementary
methods~\cite{MR3451159}.

\paragraph{Infinite-volume geometry and the UST}
There is a large body of literature in probability theory concerning
\emph{uniform spanning forests} (USF), meaning weak infinite-volume
limits of uniform spanning tree (UST) measures on finite graphs,
see~\cite[Chapter~10]{MR3616205}.  To avoid confusion with the
arboreal gas (sometimes also called the USF \cite{MR2724667}), we will
call these infinite-volume limits the UST on $\mathbb{Z}^d$. While the
component structure of the UST on a finite graph is not particularly
interesting, the infinite volume limit is: Pemantle proved that there
is a unique connected component on $\mathbb{Z}^{d}$ for $d\leq 4$, and
infinitely many connected components on $\mathbb{Z}^{d}$ for
$d>4$~\cite{MR1127715}. This happens as `long connections' can be lost
in the weak limit.

On a finite graph, the UST measure is the limit
$\beta\to\infty$ of the arboreal gas with edge weights $\beta$,
and it is natural to wonder if the arboreal gas at low temperatures
$\beta\gg 1$ has similar properties to the UST in infinite volume. For
global properties, this can evidently only happen when there is a
percolation transition.  We are therefore lead to ask: for $d\geq 3$
at low temperatures, is it the case that for $d=3,4$ the
infinite-volume arboreal gas has a unique infinite tree, while for
$d>4$ there are infinitely many infinite trees?  Is it the case that the
infinite components of the arboreal gas are topologically one-ended,
as for the UST?

\paragraph{Negative correlation}
A key tool in studying the $q$-RCM with $q\geq 1$ is that it is
\emph{positively associated}: for increasing functions
$f,g\colon \{0,1\}^{E}\to \mathbb{R}$, the covariance of $f$ and $g$
is non-negative. This is a special case of the FKG
inequality~\cite{MR0309498}. Positive association fails for $q<1$ and
for the arboreal gas. It is believed, but not known, that these cases
are in fact \emph{negatively associated}: for
$f,g\colon \{0,1\}^{E}\to\mathbb{R}$ depending on disjoint sets of
edges, the covariance of $f$ and $g$ is non-positive. Negative
association is more subtle than positive association, and the
development of flexible yet powerful theoretical frameworks is an
active subject~\cite{MR1757964,MR2476782,MR4172622,1811.01600}. While
some of this theory applies to the arboreal gas, it remains open to
prove even the special case of \emph{negative correlation}: for
distinct edges $e,f\in E$,
\begin{equation}
  \label{eq:NC}
  \mathbb{P}_{\beta}[e,f\in F] \leq \mathbb{P}_{\beta}[e\in F]\mathbb{P}_{\beta}[f\in F].
\end{equation}
Negative correlation for all weights is equivalent to all connection
probabilities
$\mathbb{P}_{\beta}[0\leftrightarrow j]=\langle
e^{t_j}\rangle_\beta^0$ being increasing in all weights
$\beta=(\beta_{ij})$, where the right-hand side is in terms of the
$t$-field of the pinned $\mathbb{H}^{0|2}$ model.  The analogue for
the $\mathbb{H}^{2|2}$ model is precisely Poudevigne's inequality,
Theorem~\ref{thm:poudevigne}. Does this inequality extend to other
$\mathbb{H}^{n|2m}$ models?

\section{Concluding remarks}
\label{sec:concluding-remarks}
This survey has focused on the connections between hyperbolic spin
systems and probabilistic models that share phenomenology with the
Anderson transition, including a number of open questions. It is also
worth repeating a question from~\cite{MR3469136}: are there other models
of random walk that are related to spin systems? A partial answer was
given in~\cite{MR4255180}, but we expect there is more to be discovered, see,
e.g.,~\cite{2102.08988}. Similarly, one may search for probabilistic
representations of $\mathbb{H}^{n|2m}$ models for values of $n,m$ not discussed
here.


\begin{ack}
  Special thanks are due to Nick Crawford and Andrew Swan for their
  collaboration~\cite{MR4021254,MR4255180,MR4218682,2107.01878} on the
  papers on which much of this survey is based.  We also thank
  R\'{e}my Poudevigne 
  for explaining the origin of the STZ
  field as discussed in Section~\ref{sec:STZ}.
\end{ack}

\begin{funding}
  This work was partially supported by the European Research Council
  under the European Union's Horizon 2020 research and innovation
  programme (grant No.~851682 SPINRG).
\end{funding}

\bibliographystyle{emss}

\end{document}